\newcounter{mnote}
  \let\oldmarginpar\marginpar
 \renewcommand\marginpar[1]{\-\oldmarginpar[\raggedleft\footnotesize #1]%
    {\raggedright\footnotesize #1}}
\def\bfa{{\bf a}}
\def\bfc{{\bf c}}
\def\bfd{{\bf d}}
\def\bff{{\bf f}}
\def\bfg{{\bf g}}
\def\bfi{{\bf i}}
\def\bfx{{\bf x}}
\def\bfz{{\bf z}}
\def\PML{{\rm PML}}
\def\3bar{{|\hspace{-.02in}|\hspace{-.02in}|}}
\newtheorem{theorem}{Theorem}
\newtheorem{lemma}{Lemma}
\newtheorem{definition}{Definition}
\newenvironment{proof}{\begin{trivlist}\item[]{\emph{Proof.}}}
               {\hfill$\Box$\end{trivlist}}
\begin{document}
\title{A Bivariate Spline Solution to the Exterior Helmholtz Equation \\and its Applications}
\author{Shelvean Kapita\footnote{shelvean.kapita@uga.edu, Department of Mathematics,
University of Georgia, Athens, GA 30602.}
\and Ming-Jun Lai
\footnote{mjlai@uga.edu. Department of Mathematics,
University of Georgia, Athens, GA 30602. 
This author is partially supported by 
the National Science Foundation under
the grant \#DMS 1521537. } }
\maketitle

\begin{abstract}
We explain how to use smooth bivariate splines of arbitrary degree to solve the exterior Helmholtz equation 
based on a Perfectly Matched Layer (PML) technique. In a previous study (cf. \cite{LM18}), 
it was shown that  bivariate spline functions of high degree can  
approximate the solution of the bounded domain Helmholtz equation with an impedance boundary condition for large wave numbers 
$k\sim 1000$. In this paper, we extend this study to the case of the Helmholtz equation in an unbounded domain. The PML is 
constructed using a complex stretching of the coordinates in a rectangular domain, resulting in a weighted Helmholtz equation with 
Dirichlet boundary conditions. The PML weights are also approximated by using spline functions. The computational algorithm
developed in \cite{LM18} is used to approximate the solution of the resulting weighted Helmholtz equation.  
Numerical results show that the PML 
formulation for the unbounded domain Helmholtz equation using bivariate splines is very effective over a range of examples.
\end{abstract}

\section{Introduction}
We are interested in approximating the numerical solution of the Helmholtz equation over an exterior domain with 
the Sommerfeld radiation condition at infinity. The problem can be described as follows: Find the scattered field $u$ that satisfies
\begin{eqnarray}
&& -\Delta u-k^2 u = f\;\;\mbox{in}\;\;\mathbb{R}^2\setminus\overline{D},\nonumber\\
&& u=g\;\;\mbox{on}\;\;\Gamma:=\partial D, \nonumber\\
&&\lim_{r\rightarrow\infty}\sqrt{r}\left(\dfrac{\partial u}{\partial r} -iku\right)=0,\;\;r=|\bfx|,\;\bfx\in\mathbb{R}^2
\label{exteriorP}
\end{eqnarray}
where $D$ is a polygonal domain, $f$ is an $L^2$ integrable function with compact support over $\mathbb{R}^2\setminus\overline{D}$, 
$u$ is the solution to be solved and $k: =\omega/c$ is the wave number, $\omega$ 
being the angular frequency of the waves and $c$ the sound speed or speed of light dependent 
on the applications. The last equation, the Sommerfeld radiation condition, is assumed to hold uniformly in all directions. We 
assume that the wavenumber  $k>0$ is constant. The more general case of inhomogeneous and anisotropic media will be considered in 
a future publication. In this paper, we assume a Dirichlet boundary condition on $D$, but the analysis and numerical results can 
be extended easily to Neumann or Robin boundary conditions. 

The Helmholtz equation arises in many applications as a simplified model of wave propagation in the frequency domain. The modeling 
of acoustic and electromagnetic wave propagation in the time-harmonic regime are well-known examples. In particular, there is 
increasing interest in understanding electromagnetic scattering by subwavelength apertures or holes \cite{LZ18, LZ18a, LZ17}. Scattering problems that 
involve periodic and layered media have also attracted attention \cite{DN12, HNS12}. As a result, robust and accurate numerical algorithms for 
solving the Helmholtz equation have been studied over the years. For low frequencies, these problems can be handled using low 
order finite elements and finite differences. However as the wavenumber is increased, low order finite elements and finite 
differences become very expensive due to small mesh sizes that are needed to adequately resolve the wave. High order finite 
elements, least squares finite elements, Trefftz Discontinuous Galerkin methods have been proposed to handle large wavenumbers. In 
the case of wave propagation in unbounded domains, accurate treatment of truncation boundaries is also of paramount importance. 

We plan to use bivariate spline functions to numerically solve the exterior Helmholtz problem [\ref{exteriorP}]. Since our method involves a volumetric discretization, we need to truncate the unbounded domain while minimizing reflections from the artificial boundary. In an ideal framework, this truncation should satisfy at least three 
properties: efficiency, easiness of implementation, and robustness. There are several approaches available
in the literature: infinite element methods (IFM)\cite{SB98}, boundary element methods (BEM) \cite{CZ92, H94}, Dirichlet-to-Neumann operators (DtN) \cite{DG04,NN06, HNPX11}, 
Absorbing Boundary Conditions (ABC) \cite{ZT13}, and Perfectly Matched Layer (PML) methods (e.g. \cite{B94} and many current
studies). The BEM approach involves a discretization of the partial differential equation on a lower dimensional surface. In the DtN and ABC approaches, a Robin \emph{boundary condition} is placed on the truncating boundary to absorb waves from the interior computational domain and to minimize reflections back into the domain. In PML approaches, a \emph{boundary layer} of finite thickness is placed on the exterior of the computational domain to absorb and attenuate any waves coming from the interior. Moreover, the PML layer should preserve the solution inside the computational region of interest while exponentially damping the wave inside the PML region. Since the wave becomes exponentially small inside the PML layer, a hard Dirichlet boundary condition can be imposed on the exterior boundary of the PML layer.

The PML technique was first introduced in [Berenger, 1994\cite{B94}] in the context of Maxwell's equations, 
but has now  become an efficient approach for dealing with exterior domain wave propagation problems such as 
Helmholtz equations and Navier equations in two and three dimensions. Many PML functions have been designed and implemented.  
The original PML by Berenger was constructed in Cartesian coordinates for the Maxwell equations, and extended to curvilinear 
coordinates by  Collino and Monk \cite{MC98}. In \cite{CW94}, Chew and Wheedon demonstrated that the PML can be constructed 
by a complex change of variables.

In this paper, we shall use the  PML technique in \cite{KP10a} and apply spline functions to solve 
(\ref{exteriorP}). Bivariate
and trivariate splines can be used for numerical solution of partial differential equations. We refer the 
reader to \cite{ALW06}, \cite{LW04}, \cite{HHL07}, \cite{GLS15}, \cite{LW18}, \cite{LM18}, and etc..  In 
particular, in \cite{LM18}, the Helmholtz equation over a bounded domain in $\mathbb{R}^2$ is solved 
numerically by using bivariate spline functions. The researchers in \cite{LM18} designed a bivariate spline method 
which very effectively solves the Helmholtz equation with 
large wave number with $k\ge 1000$.  This paper is a complement of \cite{LM18}.  

The rest of this paper is presented as follows. We first describe the PML formulation as a complex stretching of coordinates leading to a weighted Helmholtz equation. The exterior domain Helmholtz equation is then approximated by the PML problem with Dirichlet boundary conditions in a bounded domain. We apply the theory in \cite{LM18} to show the well-posedness of the truncated PML problem. We next present bivariate splines and prove error estimates of the approximation of splines to the solution of the truncated PML problem. We finish with several numerical results that demonstrate the convergence of spline functions to the PML problem.

\section{The PML formulation}
We shall use a Cartesian PML for the scattering problem  in the unbounded exterior of a 2D domain. We give a brief description 
of the setup, which can also be found in \cite{KP10a}. We shall assume $\Omega\subset\mathbb{R}^2$ is a bounded polygonal domain 
contained inside a rectangle $[-a_1,a_1]\times[-a_2,a_2]$ for some $a_1, a_2>0$.  The Cartesian PML is formulated in terms of the 
symmetric functions $\gamma_1(x_1)$ and $\gamma_2(x_2)$  for each point $\bfx = (x_1,x_2)$ in 
$\mathbb{R}^2\backslash\overline{\Omega}$. For some $b_j>0$ such that $0<a_j<b_j$, $j=1,2$  define the $\gamma_j$ by
$$\gamma_j(x_j)=1+i\sigma_j(x_j)$$ where the imaginary parts $\sigma_j$ are given by 
\[
  \sigma_j(x_j) = \left\{\begin{array}{@{}l@{\quad}l}
      0 & \mbox{if $|x_j|<a_j$,} \\[\jot]
      \sigma_0\left(\dfrac{|x_j|- a_j}{b_j-a_j}\right)^n& \mbox{if $a_j\leq |x_j|\leq b_j$,} \\[\jot]
   \sigma_0 & \mbox{if $| x_j|> b_j.$}
    \end{array}\right.
\]
where $n$ is a non-negative integer usually $n=0,1,2,3,4$ in applications. Other PML functions can be considered, e.g. unbounded 
functions \cite{BHPR04,BHPR08}. In this paper we will consider only  polynomial $\gamma_j$ defined above. We shall use the 
following notation  $A = \mbox{diag}\left(\dfrac{\gamma_2(x_2)}{\gamma_1(x_1)}, \dfrac{\gamma_1(x_1)}{\gamma_2(x_2)}\right)$ and 
$J = \gamma_1(x_1)\gamma_2(x_2)$.  We denote by $\Omega_{\!_F}=([-a_1,a_1]\times [-a_2,a_2])\backslash\overline {D}$ the region 
outside the scatterer, and by $\Omega_{\!_{PML}}$ the PML region. We refer to Figure~\ref{fig1} for an illustration of the 
geometric setting. 
\begin{figure}[!ht]
\begin{picture}(0,250)(-100,-30)
\thicklines
\unitlength=0.007in
\qbezier(200,220)(240,260)(260,200)
\qbezier(200,220)(160,260)(140,200)
\put(194,160){$D$}
\put(150,240){$\Gamma=\partial D$}
\put(200,140){\line(1,1){60}}
\put(200,140){\line(-1,1){60}}
\put(0,0){\line(1,0){400}}
\put(0,0){\line(0,1){400}}
\put(0,400){\line(1,0){400}}
\put(400,0){\line(0,1){400}}
\put(10,10){$\Omega_{\!_{PML}}$}
\put(350,10){$\Omega_{\!_{PML}}$}
\put(10,380){$\Omega_{\!_{PML}}$}
\put(350,370){$\Omega_{\!_{PML}}$}
\put(80,80){$\Omega_F$}
\put(50,50){\line(1,0){300}}
\put(50,50){\line(0,1){300}}
\put(350,50){\line(0,1){300}}
\put(50,350){\line(1,0){300}}
\put(100,410){$\Gamma_{\!_{PML}}$}
\put(200,60){$\Gamma_1= \partial \Omega_F$}
\end{picture}
\caption{An Illustration of the problem domain with a perfectly matched layer $\Omega_{\!_{PML}}$. The computational region of interest is the annulus $\Omega_{\!_F}$. 
 \label{fig1}} 
\end{figure}
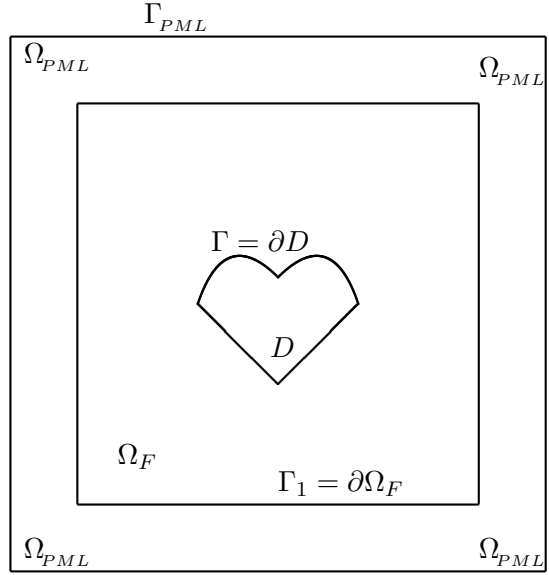
Then the exterior Helmholtz problem (\ref{exteriorP}) is approximated by the truncated PML problem in 
$\Omega:=\Omega_{\!_{PML}}\cup\Omega_{\!_F}$: Find $u_\PML\in H^1_{\Gamma}(\Omega):=\{v\in 
H^1(\Omega):\;v=g\;\mbox{on}\;\Gamma,\;v=0,\;\mbox{on}\;\Gamma_{\!_{PML}}\}$
\begin{eqnarray}
\label{eqn:trpml}
-\nabla\cdot A\nabla u_\PML - k^2 J u_\PML &=& f\;\; \hbox{ in } \Omega \nonumber \\
u_\PML &=& g\;\;\hbox{ on }\;\;\Gamma \nonumber \\
u_\PML &=& 0, \;\;\hbox{ on } \Gamma_{\!_{PML}}.
\end{eqnarray}

Note that inside $\Omega_{\!_F}$ we have $A=I$ and $J=1$ and the PML equation reduces to the standard Helmholtz equation. It is 
known that the truncated Cartesian PML problem above converges exponentially to the solution of the exterior Helmholtz equation 
inside the domain $\Omega_{\!_F}$ with a rate of convergence depending on the thickness of the PML layer and on the size of the 
parameter $\sigma_0$. Therefore it is sufficient to approximate the PML solution $u_\PML$ using bivariate splines. As the 
bivariate splines can efficiently and effectively solve the boundary value problem of Helmholtz equation as demonstrated
in \cite{LM18}, we apply the bivariate splines to approximate the PML solution.

\section{The Well-Posedness of the exterior domain Helmholtz equation}
In this section we first present the existence, uniqueness and stability of the truncated Cartesian PML 
problem. Then we explain the approximation of the PML solution $u_\PML$ to the exact solution $u$ of (\ref{exteriorP}). 

Let $\mathbb{L}^2(\Omega)$ be the space of all complex-valued square integrable functions over $\Omega$ and 
$\mathbb{H}^1(\Omega)\subset \mathbb{L}^2(\Omega)$ be the complex-valued square integrable functions over $\Omega$ 
such that their gradients are in $\mathbb{L}^2(\Omega)$. 
Let $\mathbb{H}^1_0(\Omega)$ be the space of complex valued functions 
with bounded gradients which vanish on the boundary of $\Omega$.    
The weak formulation of the PML exterior domain Helmholtz equation can be defined in a standard fashion as follows. Find 
$u_\PML\in\mathbb{H}^1_0(\Omega)$ that satisfies $u_\PML|_{\Gamma}=g$, $u_\PML|_{\Gamma_{\PML}}=0$ and 
\begin{eqnarray}
\label{weakform}
\int_{\Omega}\left(A\nabla u_\PML\cdot\nabla\overline{v} - k^2 J u_\PML\overline{v}\right)\; d\bfx &=& 
\int_{\Omega}f\overline{v}\;d\bfx
\end{eqnarray}
for all $v\in \mathbb{H}^1_0(\Omega)$. For convenience, let $B_{k^2}(u_\PML,v)$ be the sesquilinear form on the left hand side of 
(\ref{weakform}). The above equation can then be simply rewritten as 
\begin{equation}
\label{weakform2}
B_{k^2}(u_\PML, v)= \langle f, v\rangle_\Omega, \quad \forall v\in \mathbb{H}^1_0(\Omega). 
\end{equation}
where $\langle f, v\rangle_\Omega = \int_{\Omega} f \overline{v}d\bfx$. We shall use the Fredholm alternative 
theorem to establish the existence and uniqueness of the PML weak solution $u_\PML$. 

Consider the following two second order elliptic partial differential equations:
\begin{equation}
\label{eigen}
\begin{cases}
B_{\lambda}(u, v) &= 0, \hbox{ in } \Omega \cr
u &=0, \hbox{ in } \partial \Omega
\end{cases}
\end{equation}
and
\begin{equation}
\label{PDE2}
\begin{cases}
B_{\lambda}(u, v) &= \langle f, v\rangle_{\Omega}, \hbox{ in } \Omega \cr
u  &=0, \hbox{ in } \partial \Omega,
\end{cases}
\end{equation}
where $\lambda> 0$ is a constant, $f\in L^2(\Omega)$. We have the following well-known result: 
\begin{theorem}[Fredholm Alternative Theorem]
\label{Fredholm} Fix $\lambda>0$. Precisely one of the following two statements holds: 
Either (\ref{eigen}) has a nonzero weak solution $u\in \mathbb{H}^1(\Omega)$ 
or there exists a unique weak solution $u_f\in \mathbb{H}^1(\Omega)$ satisfying (\ref{PDE2}). 
\end{theorem}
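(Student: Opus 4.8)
\emph{Proof proposal.} The plan is to recast the weak problem as an operator equation of the form $(I-K)u=F$ with $K$ compact on $\mathbb{L}^2(\Omega)$, and then invoke the Riesz--Schauder theory of compact operators, for which the dichotomy in the statement is the classical conclusion. Writing the sesquilinear form as
\[
B_\lambda(u,v)=\int_\Omega A\nabla u\cdot\nabla\overline v\,d\bfx-\lambda\int_\Omega J\,u\overline v\,d\bfx,
\]
I would first isolate the principal part $a(u,v):=\int_\Omega A\nabla u\cdot\nabla\overline v\,d\bfx$ and establish a G\aa rding inequality. The eigenvalues of $A$ are $\gamma_2/\gamma_1$ and $\gamma_1/\gamma_2$ with $\gamma_j=1+i\sigma_j$ and $\sigma_j\ge 0$ bounded, so $\mathrm{Re}(A\xi\cdot\overline\xi)\ge c_0|\xi|^2$ for a constant $c_0>0$ depending only on $\sigma_0$, while the lower-order term is $\mathbb{L}^2$-bounded since $J$ is bounded. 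Hence there are constants $\mu>0$ and $\alpha>0$ with
\[
\mathrm{Re}\,\big(B_\lambda(u,u)+\mu\langle u,u\rangle_\Omega\big)\ge \alpha\|u\|_{\mathbb{H}^1}^2,\qquad u\in\mathbb{H}^1_0(\Omega).
\]

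Define the shifted form $\tilde B(u,v):=B_\lambda(u,v)+\mu\langle u,v\rangle_\Omega$, which is bounded and coercive on $\mathbb{H}^1_0(\Omega)$. By the complex Lax--Milgram theorem, for every $w\in\mathbb{L}^2(\Omega)$ there is a unique $Tw\in\mathbb{H}^1_0(\Omega)$ with $\tilde B(Tw,v)=\langle w,v\rangle_\Omega$ for all $v\in\mathbb{H}^1_0(\Omega)$, and $T:\mathbb{L}^2(\Omega)\to\mathbb{H}^1_0(\Omega)$ is bounded. Because $\Omega$ is a bounded domain, the embedding $\mathbb{H}^1_0(\Omega)\hookrightarrow\mathbb{L}^2(\Omega)$ is compact by the Rellich--Kondrachov theorem, so $K:=\mu T$, regarded as an operator on $\mathbb{L}^2(\Omega)$, is compact.

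Next I would translate the two problems into this language. A function $u\in\mathbb{H}^1_0(\Omega)$ solves $B_\lambda(u,v)=\langle f,v\rangle_\Omega$ if and only if $\tilde B(u,v)=\langle \mu u+f,\,v\rangle_\Omega$, i.e. $u=T(\mu u+f)=Ku+Tf$, so that (\ref{PDE2}) is equivalent to $(I-K)u=Tf$; likewise (\ref{eigen}) is equivalent to $(I-K)u=0$. Since $K$ is compact on the Hilbert space $\mathbb{L}^2(\Omega)$, the Fredholm theory for $I-K$ applies: the operator has index zero, so injectivity is equivalent to surjectivity. Therefore exactly one of two mutually exclusive situations occurs: either $\ker(I-K)\neq\{0\}$, which yields a nonzero weak solution of (\ref{eigen}), or $\ker(I-K)=\{0\}$, in which case $I-K$ is a bijection and (\ref{PDE2}) has the unique solution $u_f=(I-K)^{-1}Tf$. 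A final routine check confirms that $Tf$ and $u_f$ lie in $\mathbb{H}^1_0(\Omega)$ and that these operator solutions genuinely satisfy the weak formulations.

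The main obstacle is the first step: because the PML matrix $A$ and weight $J$ are \emph{complex}-valued, the form $B_\lambda$ is neither Hermitian nor obviously coercive, so the G\aa rding inequality cannot be read off directly. The crux is the explicit sign analysis of $\mathrm{Re}(A\xi\cdot\overline\xi)$ using $\gamma_j=1+i\sigma_j$; once uniform ellipticity of the real part of $A$ is secured, absorbing the lower-order $\lambda J$-term into the shift $\mu\langle\cdot,\cdot\rangle_\Omega$ and the remainder of the argument follow the classical template.
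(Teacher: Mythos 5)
The paper does not actually prove this theorem; its ``proof'' is a one-line citation to Evans \cite{E98}. Your proposal supplies precisely the standard argument that reference contains (G\aa rding inequality, Lax--Milgram for the shifted form, compactness of the solution operator via Rellich--Kondrachov, then the Riesz--Schauder alternative for $I-K$), so in substance you are reconstructing the outsourced proof rather than taking a different route. The only step that is not boilerplate is the one you correctly flag as the crux: since $A=\mathrm{diag}(\gamma_2/\gamma_1,\gamma_1/\gamma_2)$ with $\gamma_j=1+i\sigma_j$, one has $\mathrm{Re}(\gamma_2/\gamma_1)=(1+\sigma_1\sigma_2)/(1+\sigma_1^2)\ge 1/(1+\sigma_0^2)>0$ (and symmetrically for the other entry), so $\mathrm{Re}(A\xi\cdot\overline{\xi})\ge c_0|\xi|^2$ uniformly and the G\aa rding inequality follows with $\mu$ absorbing the bounded weight $\lambda J$. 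The remaining translation to $(I-K)u=Tf$, the index-zero argument, and the mutual exclusivity of the two alternatives are all handled correctly; the solutions automatically land in $\mathbb{H}^1_0(\Omega)$ because they lie in the range of $T$. Your proof is correct and adds value the paper omits, namely the verification that the complex PML coefficients do not obstruct the classical machinery.
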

\begin{proof}
We refer to \cite{E98} for a proof. 
\end{proof}

\begin{theorem} 
\label{mainresult}
Let $g\in  H^{-1/2}(\Gamma)$ and $f\in L^2(\Omega)$.  Suppose that 
$\sigma_j, j=1, 2$ be bounded, continuous, non-negative and 
monotonically increasing functions. Then, the truncated PML problem (\ref{eqn:trpml}) has a unique 
weak solution $u_\PML \in \mathbb{H}^1(\Omega)$ in the sense of (\ref{weakform2}) for every real value 
$k$, except at most for a discrete set of values of $k$.
\end{theorem}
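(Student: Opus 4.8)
The plan is to recast the weak problem (\ref{weakform2}) as an operator equation on $\mathbb{H}^1_0(\Omega)$ and then invoke the Fredholm Alternative Theorem (Theorem~\ref{Fredholm}) with $\lambda = k^2$. First I would remove the inhomogeneous Dirichlet data: using a bounded extension of the boundary data $g$, pick a lifting $u_g\in\mathbb{H}^1(\Omega)$ with $u_g|_{\Gamma}=g$ and $u_g|_{\Gamma_{\PML}}=0$, and set $w=u_\PML-u_g$. Then $w\in\mathbb{H}^1_0(\Omega)$ solves $B_{k^2}(w,v)=\langle f,v\rangle_\Omega - B_{k^2}(u_g,v)$ for all $v\in\mathbb{H}^1_0(\Omega)$, and the right-hand side is a bounded antilinear functional on $\mathbb{H}^1_0(\Omega)$. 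It therefore suffices to prove the alternative for this homogenized problem.

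Second, I would establish a G\aa rding inequality for $B_{k^2}$. Writing $A\nabla u\cdot\nabla\overline u = (\gamma_2/\gamma_1)|\partial_1 u|^2 + (\gamma_1/\gamma_2)|\partial_2 u|^2$, a direct computation with $\gamma_j=1+i\sigma_j$ gives $\mathrm{Re}(\gamma_2/\gamma_1)=(1+\sigma_1\sigma_2)/(1+\sigma_1^2)$ and $\mathrm{Re}(\gamma_1/\gamma_2)=(1+\sigma_1\sigma_2)/(1+\sigma_2^2)$, both bounded below by a positive constant $c_0$ precisely because $\sigma_1,\sigma_2\geq 0$ are bounded. Since $\mathrm{Re}\,J=1-\sigma_1\sigma_2$ is bounded, it follows that $\mathrm{Re}\,B_{k^2}(u,u)\geq c_0\|\nabla u\|_{\mathbb{L}^2}^2 - k^2\|\mathrm{Re}\,J\|_\infty\|u\|_{\mathbb{L}^2}^2$, so that for a sufficiently large $C_k>0$ the shifted form $u\mapsto B_{k^2}(u,u)+C_k\langle u,u\rangle_\Omega$ is coercive on $\mathbb{H}^1_0(\Omega)$. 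By Lax--Milgram this shifted form defines a bounded invertible operator, and composing its inverse with the compact Rellich embedding $\mathbb{H}^1_0(\Omega)\hookrightarrow\mathbb{L}^2(\Omega)$ exhibits the solution operator for $B_{k^2}$ as a coercive problem perturbed by a compact operator.

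Third, the Fredholm alternative (Theorem~\ref{Fredholm}) applied to this compact perturbation yields, for each fixed $k$, exactly one of: the homogeneous equation (\ref{eigen}) has a nonzero weak solution, or (\ref{PDE2}) is uniquely solvable. To upgrade the conclusion from ``for each fixed $k$'' to ``all real $k$ except a discrete set,'' I would appeal to the analytic Fredholm theorem: the family $k\mapsto B_{k^2}$ depends polynomially, hence holomorphically, on $k^2$, so the associated resolvent is a meromorphic operator-valued function whose poles---the exceptional wave numbers---form a discrete set, provided the family is invertible at one point.

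The main obstacle I anticipate lies in the last two steps. Verifying the uniform positivity $c_0>0$ of $\mathrm{Re}\,A$ is exactly where the hypotheses on $\sigma_j$ (nonnegative and bounded) are indispensable: without boundedness the ratios $(1+\sigma_1\sigma_2)/(1+\sigma_j^2)$ could degenerate to $0$ and destroy coercivity. The more delicate point is discharging the non-degeneracy hypothesis in the analytic Fredholm step, i.e. showing the exceptional set is not all of $\mathbb{R}$. For this I would exhibit a single $k$ at which uniqueness holds: taking $|k|$ small and using the Poincar\'e inequality on $\mathbb{H}^1_0(\Omega)$ (valid since its functions vanish on all of $\partial\Omega$), the zeroth-order term $k^2\|\mathrm{Re}\,J\|_\infty\|u\|_{\mathbb{L}^2}^2$ is dominated by $c_0\|\nabla u\|_{\mathbb{L}^2}^2$, making $B_{k^2}$ itself coercive, so (\ref{eigen}) admits only the trivial solution and the degenerate branch of the alternative is ruled out.
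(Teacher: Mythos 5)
Your proposal is correct and, at its core, rests on the same tool as the paper's proof --- the Fredholm Alternative (Theorem~\ref{Fredholm}) with $\lambda=k^2$ --- but you supply three verifications that the paper omits entirely. The published proof is two sentences: if $k^2$ is not an eigenvalue of (\ref{eigen}), invoke Theorem~\ref{Fredholm}; otherwise, observe that the Dirichlet eigenvalues of the \emph{Laplace} operator are countable and tend to infinity. It never checks that the complex-coefficient form $B_{k^2}$ actually falls within the scope of the Fredholm alternative, never lifts the inhomogeneous boundary data $g$, and justifies discreteness of the exceptional set by appeal to the self-adjoint Laplacian rather than the non-self-adjoint PML operator appearing in (\ref{eigen}). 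Your G\aa rding inequality --- with the explicit computation $\mathrm{Re}(\gamma_2/\gamma_1)=(1+\sigma_1\sigma_2)/(1+\sigma_1^2)\ge c_0>0$, which is precisely where the hypotheses that the $\sigma_j$ be nonnegative and bounded enter --- is what makes the Lax--Milgram-plus-compact-perturbation machinery behind Theorem~\ref{Fredholm} legitimately applicable to this sesquilinear form. Likewise, your analytic Fredholm argument, anchored by coercivity of $B_{k^2}$ for small $k$ via the Poincar\'e inequality on $\mathbb{H}^1_0(\Omega)$, is the correct route to discreteness of the exceptional wave numbers for a non-self-adjoint problem; the paper's citation of the Laplace spectrum does not literally apply to (\ref{eigen}). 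In short, your proposal is the fleshed-out version of the paper's sketch, and the steps you add are genuinely needed rather than optional polish; each of your claimed computations ($\mathrm{Re}\,J=1-\sigma_1\sigma_2$, the factorization of the operator as an invertible principal part plus a compact perturbation) checks out.
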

\begin{proof}\;
If $k^2$ is not an eigenvalue of (\ref{eigen}), then we use 
Fredholm Alternative Theorem~\ref{Fredholm} to conclude that the PDE in (\ref{PDE2}) has a unique solution.  
Otherwise, it is well-known that there are only countable numbers of eigenvalues of the Dirichlet problem of Laplace operator 
and these eigenvalues $\lambda_i>0, i=1, 2, \cdots, \infty$ with $\lambda_j\to \infty$. 
\end{proof} 

Next we discuss the stability of the weak solution $u_\PML$. To analyze the weak formulation, it is convenient to introduce 
the following norm which is equivalent to the standard $H^1$ norm:
\begin{eqnarray}
\3bar u \3bar_{H}^2 
= &&\int_{\Omega}\left(\sum_{j=1}^2|A_{jj}|\left|\dfrac{\partial u}{\partial x_j}\right|^2+ k^2 |J||u|^2\right)\;d\bfx\\
=&&|u|^2_H+k^2\|u\|^2_H
\end{eqnarray}  
where  
$|u|^2_H = \int_{\Omega}\sum_{j=1}^2|A_{jj}|\left|\partial_{x_j} u\right|^2\;d\bfx$ the PML weighted $\mathbb{H}^1(\Omega)$ semi-norm and by $\|u\|^2_H = \int_{\Omega} |J||u|^2 d\bfx$ the weighted $\mathbb{L}^2(\Omega)$ norm.

The following continuity condition of the sesquilinear form $B_{k^2}(u, v)$ can be obtained easily. 
\begin{lemma}
\label{biform}
Suppose that $f\in L^2(\Omega)$. Then 
\begin{equation}
\label{continuity}
|B_{k^2}(u, v)| \le C_B \3bar  u\3bar_{H}  \, \3bar v\3bar_{H},
\end{equation} 
where $C_B$ is a positive constant dependent on $\Omega$ only. 
\end{lemma}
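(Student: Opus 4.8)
The plan is to exploit the diagonal structure of $A$ to write the sesquilinear form as a sum of weighted one-dimensional pairings, and then to apply the Cauchy--Schwarz inequality term by term against the matching pieces of the triple-bar norm. First I would use that $A=\mbox{diag}(A_{11},A_{22})$ to expand
$$A\nabla u \cdot \nabla\overline{v} = \sum_{j=1}^2 A_{jj}\,\frac{\partial u}{\partial x_j}\,\frac{\partial\overline{v}}{\partial x_j},$$
so that $B_{k^2}(u,v)$ is the integral of $\sum_{j}A_{jj}\,\partial_{x_j}u\,\partial_{x_j}\overline v - k^2 J u\overline v$. Moving the modulus inside the integral by the triangle inequality and using the pointwise identities $|A_{jj}\,\partial_{x_j}u\,\partial_{x_j}\overline v| = |A_{jj}|\,|\partial_{x_j}u|\,|\partial_{x_j}v|$ and $|k^2 J u\overline v| = k^2|J|\,|u|\,|v|$ reduces the task to bounding three nonnegative integrals.

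For each gradient term I would split the weight symmetrically as $|A_{jj}| = |A_{jj}|^{1/2}\,|A_{jj}|^{1/2}$ and apply the integral Cauchy--Schwarz inequality, obtaining
$$\int_\Omega |A_{jj}|\,|\partial_{x_j}u|\,|\partial_{x_j}v|\,d\bfx \le \Bigl(\int_\Omega |A_{jj}|\,|\partial_{x_j}u|^2\,d\bfx\Bigr)^{1/2}\Bigl(\int_\Omega |A_{jj}|\,|\partial_{x_j}v|^2\,d\bfx\Bigr)^{1/2}.$$
The mass term is handled identically with the weight $k^2|J|$. Each factor produced this way is precisely one of the three summands defining $\3bar u \3bar_H^2$ or $\3bar v\3bar_H^2$.

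Finally I would collect the three products and apply the discrete Cauchy--Schwarz inequality to the triples of square roots: writing $a_j$ and $b_j$ for the three factors associated with $u$ and $v$ respectively, one has $\sum_j a_j b_j \le (\sum_j a_j^2)^{1/2}(\sum_j b_j^2)^{1/2} = \3bar u\3bar_H\,\3bar v\3bar_H$. This yields the continuity estimate, in fact with constant $C_B=1$; the stated dependence on $\Omega$ is therefore harmless, since the number of terms is fixed and no geometric constant is actually needed once the norm is weighted by $|A_{jj}|$ and $|J|$.

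I do not anticipate a genuine obstacle here: the estimate is essentially bookkeeping with Cauchy--Schwarz once the diagonal form of $A$ is used. The only point requiring mild care is that the modulus of the complex coefficient $A_{jj}$ factors cleanly out of the pointwise product, which is exactly what makes the weighted triple-bar norm the natural quantity to bound against.
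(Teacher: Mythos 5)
Your argument is correct, and it is the standard one the authors have in mind: the paper gives no proof at all, merely asserting that the continuity estimate ``can be obtained easily,'' and your pointwise expansion using the diagonal form of $A$ followed by weighted Cauchy--Schwarz and the discrete Cauchy--Schwarz over the three summands is exactly the intended bookkeeping, yielding the bound with $C_B=1$ (so the stated dependence of $C_B$ on $\Omega$ is indeed vacuous here, and the hypothesis $f\in L^2(\Omega)$ plays no role in the continuity of the form).
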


One of the major theoretical results in this paper is the following 
\begin{theorem}
\label{mainresult2}
Let $\Omega \subset \mathbb{R}^2$ be a bounded Lipschitz domain. Suppose that $k^2$ is not a Dirichlet 
eigenvalue of (\ref{eigen}) over $\Omega$. Then there exists $C_*>0$ which does not go to zero when $k\to \infty$ 
such that 
\begin{equation}
\label{infsup2}
\inf_{v\in \mathbb{H}^1(\Omega)}  \sup_{u\in \mathbb{H}^1(\Omega)}  
\frac{\hbox{Re}(B_{k^2}(u,v))}{\3bar u\3bar_{H}\, \3bar v\3bar_{H}} \ge C_*.  
\end{equation}
\end{theorem}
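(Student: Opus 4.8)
The plan is to establish (\ref{infsup2}) constructively: for each fixed $v\in\mathbb{H}^1(\Omega)$ I would exhibit an explicit test function $u_0=u_0(v)$ satisfying $\mathrm{Re}\,B_{k^2}(u_0,v)\ge c\,\3bar v\3bar_H^2$ and $\3bar u_0\3bar_H\le C\,\3bar v\3bar_H$ with $c,C$ independent of $k$, so that the quotient in (\ref{infsup2}) is bounded below by $c/C=:C_*$. Two preliminary remarks organize the argument. First, since $\3bar\cdot\3bar_H$ is invariant under multiplication by a unit-modulus phase and $u$ ranges over all of $\mathbb{H}^1(\Omega)$, one has $\sup_u\mathrm{Re}\,B_{k^2}(u,v)/\3bar u\3bar_H=\sup_u|B_{k^2}(u,v)|/\3bar u\3bar_H$; thus (\ref{infsup2}) is equivalent to the wavenumber-uniform stability estimate $\3bar v\3bar_H\le C_*^{-1}\sup_u|B_{k^2}(u,v)|/\3bar u\3bar_H$. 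Second, because $k^2$ is not a Dirichlet eigenvalue of (\ref{eigen}), Theorem~\ref{mainresult} already yields, via the Fredholm alternative, that this constant is finite for each admissible $k$; the real content of Theorem~\ref{mainresult2} is therefore that it stays bounded away from zero as $k\to\infty$.

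The first ingredient is a G\r{a}rding inequality with $k$-independent constants, coming from the choice $u_0=v$. Writing $\gamma_j=1+i\sigma_j$, one computes $\mathrm{Re}\,A_{jj}=(1+\sigma_1\sigma_2)/(1+\sigma_j^2)$ and $|A_{jj}|=\sqrt{(1+\sigma_{3-j}^2)/(1+\sigma_j^2)}$, so that $\mathrm{Re}\,A_{jj}\ge (1+\sigma_0^2)^{-1}|A_{jj}|$ because the $\sigma_j$ are bounded by $\sigma_0$; hence $\mathrm{Re}\int_\Omega A\nabla v\cdot\nabla\overline v\,d\bfx\ge c_1|v|_H^2$. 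For the zeroth order term the algebraic identity $(1+\sigma_1^2)(1+\sigma_2^2)-(1-\sigma_1\sigma_2)^2=(\sigma_1+\sigma_2)^2\ge 0$ gives $|\mathrm{Re}\,J|=|1-\sigma_1\sigma_2|\le|J|$, whence $-k^2\int_\Omega\mathrm{Re}(J)|v|^2\,d\bfx\ge -k^2\|v\|_H^2$. Combining these, $\mathrm{Re}\,B_{k^2}(v,v)\ge c_1|v|_H^2-k^2\|v\|_H^2$, which controls the seminorm $|v|_H$ but leaves the indefinite $L^2$ contribution $k^2\|v\|_H^2$ to be recovered by correcting $u_0$.

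To recover $+k^2\|v\|_H^2$ I would add two further pieces and set $u_0=\alpha v+\beta\,iv+\gamma\,(\bfx\cdot\nabla v)$ with real $\alpha,\beta,\gamma$ to be tuned. Testing with $iv$ exploits the imaginary part, $-\mathrm{Im}\,B_{k^2}(v,v)=k^2\int_\Omega(\sigma_1+\sigma_2)|v|^2\,d\bfx-\int_\Omega[\mathrm{Im}(A_{11})|\partial_{x_1}v|^2+\mathrm{Im}(A_{22})|\partial_{x_2}v|^2]\,d\bfx$, and since $|\mathrm{Im}\,A_{jj}|\le|A_{jj}|$ this delivers control of $k^2\|v\|^2$ weighted by $\sigma_1+\sigma_2$, i.e. in the absorbing part of $\Omega_{\PML}$, at the cost of a bounded multiple of $|v|_H^2$. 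The remaining $L^2$ mass lives in $\Omega_F$ and the transition layer, where no absorption is available; there I would employ the Rellich--Morawetz multiplier $\bfx\cdot\nabla v$. Integrating $A\nabla v\cdot\nabla\overline{(\bfx\cdot\nabla v)}-k^2Jv\,\overline{(\bfx\cdot\nabla v)}$ by parts produces a positive multiple of $k^2\|v\|_{L^2(\Omega_F)}^2$ together with gradient terms, plus boundary integrals on $\Gamma$, $\Gamma_1$ and $\Gamma_{\PML}$. Choosing $\alpha,\beta,\gamma$ (with $\gamma$ scaled in $k$ so that $\3bar u_0\3bar_H\le C\3bar v\3bar_H$) so that all the positive $k^2\|v\|^2$ contributions dominate the single negative $-\alpha k^2\|v\|_H^2$ then yields $\mathrm{Re}\,B_{k^2}(u_0,v)\ge c\,\3bar v\3bar_H^2$.

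The hard part will be the Rellich--Morawetz step, and in particular the sign of its boundary contributions. On the Dirichlet boundaries $\Gamma$ and $\Gamma_{\PML}$ one has $v=0$, so $\nabla v$ is purely normal and the boundary term reduces to $\int(\bfx\cdot\bfn)|\partial_n v|^2$; controlling its sign requires a star-shapedness hypothesis on the scatterer $D$ and on the rectangle, and the possible non-convexity of a general polygonal $D$ is exactly what could destroy $k$-independence. Across the interface $\Gamma_1$, where $A$ and $J$ jump from the identity to the stretched values, one gets additional interface integrals that must be shown to be absorbable, and matching them against the imaginary-part absorption in the transition layer is the delicate bookkeeping. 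An alternative that may be cleaner here is to view the PML as inducing an effective impedance condition on $\partial\Omega_F$ and to import directly the wavenumber-explicit stability established for the impedance Helmholtz problem in \cite{LM18}, working on $\Omega_F$ and then extending the bound into the exponentially damped PML region. In either route the crux is the $k$-independent estimate on $\Omega_F$; the G\r{a}rding and imaginary-part steps above are routine once the pointwise weight inequalities $\mathrm{Re}\,A_{jj}\ge(1+\sigma_0^2)^{-1}|A_{jj}|$ and $|\mathrm{Re}\,J|\le|J|$ are in hand.
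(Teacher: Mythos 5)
Your proposal takes a genuinely different route from the paper, and it stalls exactly where you predict it will. The paper does not construct a test function at all: it proves \eqref{infsup2} by contradiction and compactness. For fixed $k$ it takes a sequence $v_n$ with $\3bar v_n\3bar_H=1$ violating the inf-sup bound, extracts an $\mathbb{L}^2$-convergent subsequence by Rellich--Kondrachov, and shows the limit $v^*$ satisfies $\mathrm{Re}\,B_{k^2}(u,v^*)=0$ for all $u$, so that $v^*$ would be a Dirichlet eigenfunction of \eqref{eigen} with eigenvalue $k^2$ --- excluded by hypothesis --- forcing $v^*\equiv 0$ and contradicting $\3bar v^*\3bar_H=1$. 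The $k$-uniformity is then handled by a second, separate compactness argument: if $L_k$ denotes the best constant and $L_k\to 0$, near-minimizers $v_k$ satisfy $\|v_k\|_H\le 1/k$, so their $\mathbb{L}^2$ limit vanishes and $|v_k|_H\to 0$; but then $\bigl|\,|v_k|_H^2-k^2\|v_k\|_H^2\bigr|=|\mathrm{Re}\,B_{k^2}(v_k,v_k)|\to 0$ forces $k^2\|v_k\|_H^2\to 0$, contradicting $\3bar v_k\3bar_H=1$. No multiplier, no geometry of $D$, and no sign conditions on boundary terms enter anywhere.

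The gap in your plan is the Rellich--Morawetz step, and it is not merely ``delicate bookkeeping.'' The theorem assumes only that $\Omega$ is a bounded Lipschitz domain with a polygonal scatterer; Example 4 of the paper is explicitly a trapping obstacle (a square cavity with an opening), for which star-shapedness fails and the boundary term $\int_\Gamma(\bfx\cdot\bfn)|\partial_n v|^2$ arising from the multiplier $\bfx\cdot\nabla v$ has no favorable sign. For such geometries the multiplier identity cannot be closed without additional hypotheses, so your constructive argument cannot prove the theorem as stated; at best it proves a version restricted to star-shaped scatterers. The interface integrals on $\Gamma_1$, where $A$ and $J$ jump in regularity from the identity to the stretched values, are a second obstruction you have named but not resolved. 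Your preliminary reductions are all correct and would be valuable in a quantitative refinement --- the phase-invariance identity $\sup_u\mathrm{Re}\,B_{k^2}(u,v)/\3bar u\3bar_H=\sup_u|B_{k^2}(u,v)|/\3bar u\3bar_H$, the weight inequalities $\mathrm{Re}\,A_{jj}\ge(1+\sigma_0^2)^{-1}|A_{jj}|$ and $|\mathrm{Re}\,J|\le|J|$, and the resulting G\r{a}rding inequality $\mathrm{Re}\,B_{k^2}(v,v)\ge c_1|v|_H^2-k^2\|v\|_H^2$ all check out --- but the conclusion for general Lipschitz $\Omega$ is only reachable by the soft compactness route the paper takes, which in exchange yields no explicit value of $C_*$.
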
  
\begin{proof}
Suppose (\ref{infsup2}) does not hold. Then there exists $v_n\in \mathbb{H}^1(\Omega)$ such that $\3bar v_n\3bar_{H}=1$ and 
$$
\sup_{u\in \mathbb{H}^1(\Omega)}  
\frac{\hbox{Re}(B_{k^2}(u,v_n))}{\3bar u\3bar_{H}} \le \frac{1}{n}
$$
for $n=1, \cdots, \infty$. The boundedness of $v_n$ implies that there exists a convergent subsequence that converges in $\mathbb{L}^2(\Omega)$, by the
Rellich-Kondrachov Theorem. Without loss of generality we may assume that $v_n\to v^*$ 
in the $\mathbb{L}^2(\Omega)$ norm and in the semi-norm on $\mathbb{H}^1(\Omega)$ with 
$\3bar v^*\3bar_{H}=1$. It follows that for each 
$u\in  \mathbb{H}^1(\Omega)$ with $\3bar u\3bar_{H}=1$, 
$\hbox{Re}(B_{k^2}(u,v_n)) \to 0$. Hence, $\hbox{Re}(B_{k^2}(u,v^*))=0$ 
for all $u\in \mathbb{H}^1(\Omega)$.  
By using $u= v^*$, we see that $B_{k^2}(v^*, v^*))=0$. 

So if $v^*\not=0$,  $v^*\in \mathbb{H}^1_0(\Omega)$ is an eigenfunction with eigenvalue $k^2$ of the Dirichlet problem for
the truncated homogeneous PML problem (\ref{eigen}). This contradicts the assumption that $k^2$ is not a Dirichlet eigenvalue of (\ref{eigen}).
Hence, we have $v^*\equiv 0$ which contradicts to the fact $\3bar v^*\3bar_{H}=1$.  Therefore, $C_*>0$ in (\ref{infsup2}).  

We next show that $C_*$ does not go to zero  when $k\to \infty$.  
For each integer $k>0$, let $L_k$ be the best lower bound in (\ref{infsup2}). 
We claim that $L_k\not \to 0$. Indeed, for $L_k>0$, we can find $v_k$ with $\3bar v_k\3bar_{H}=1$ such that 
$$
\sup_{u\in \mathbb{H}^1(\Omega)}  
\frac{ \hbox{Re}(B_{k^2}(u,v_k))}{\3bar u\3bar_{H}} \le 2L_k. 
$$  
That is, $\hbox{Re}(B_{k^2}(v_k,v_k)) \le 2L_k$.  
Since  $|v_k|_H\le 1$ and $k^2 \|v_k\|_H^2 \le 1$ or $\|v_k\|_H\le 1/k\le 1$, we use Rellich-Kondrachov 
Theorem again to conclude that there exists a $u^*\in \mathbb{H}^1(\Omega)$ such that $v_k\to u^*$ in $L^2$ norm 
and $|v_k|_H\to |u^*|_H$ without loss of generality.  
As $\|v_k\|_H\le 1/k$, we have $\|u^*\|_H\le 2/k$ for $k>0$ large enough. It follows that $u^*\equiv 0$. 
Thus, $\nabla u^*\equiv 0$, that is, $|u^*|_H=0$. Hence, $|v_k|_H\to 0$. If $L_k\to 0$, we have
$ \large{|}| v_k|_H^2 -k^2 \|v_k\|_H^2\large{|} = |\hbox{Re}(B_{k^2}(v_k, v_k))| \to 0$. 
It follows that  $k^2\|v_k\|_H^2 \to 0$. However, 
since $\3bar v_k\3bar_{H}=1$, we have $k^2\|v_k\|_H^2 \to 1$.  That is, we got a contradiction.   
Therefore, $L_k$ does not go to zero when $k\to \infty$.    
\end{proof}

Furthermore, the weak solution is stable. Indeed, 
\begin{theorem}
\label{newstable}
Let $\Omega \subset \mathbb{R}^2$ be a bounded Lipschitz domain as described in the introduction section. 
Suppose that $k^2$ is not a Dirichlet eigenvalue of the elliptic PDE (\ref{eigen}) over $\Omega$. 
Let $u_\PML\in \mathbb{H}^1(\Omega)$ be the unique weak solution to (\ref{weakform}) as in Theorem~\ref{mainresult}. 
Then there exists a constant $C>0$ independent of $f$ such that 
\begin{equation}
\label{newbound}
\3bar u_\PML \3bar_{H} \le C\|f\|
\end{equation}
for $k\ge 1$, where $C$ is dependent on  the constant $C_*$ which is the lower bound in (\ref{infsup2}) and hence, 
$C$ will not go to $\infty$ as $k\to \infty$.  
\end{theorem}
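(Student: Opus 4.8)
The plan is to derive (\ref{newbound}) directly from the inf--sup bound of Theorem~\ref{mainresult2} together with the weak formulation (\ref{weakform2}); the continuity estimate of Lemma~\ref{biform} is not needed for the upper bound itself but only guarantees that the quantities below are finite. The whole argument is the standard Babu\v{s}ka--Ne\v{c}as duality: once we know that the bilinear form is uniformly inf--sup stable and that the right-hand side is a bounded functional in the $\3bar\cdot\3bar_H$ norm, the size of the solution is controlled by the ratio of the two constants.

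First I would bound the load functional $v\mapsto\langle f,v\rangle_\Omega$ in the triple-bar norm. Since $\gamma_j=1+i\sigma_j$ gives $|J|=|\gamma_1\gamma_2|\ge 1$, the weighted $\mathbb{L}^2$ norm dominates the ordinary one, so $\|v\|_{L^2}\le \|v\|_H$; moreover $\3bar v\3bar_H^2=|v|_H^2+k^2\|v\|_H^2\ge k^2\|v\|_H^2$ yields $\|v\|_H\le \frac1k\3bar v\3bar_H$. Combining these with the Cauchy--Schwarz inequality gives, for $k\ge 1$,
\begin{equation*}
|\langle f,v\rangle_\Omega|\le \|f\|\,\|v\|_{L^2}\le \frac{1}{k}\|f\|\,\3bar v\3bar_H\le \|f\|\,\3bar v\3bar_H ,
\end{equation*}
so the functional has $\3bar\cdot\3bar_H$-dual norm at most $\|f\|$.

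Next I would invoke the inf--sup inequality. Because the solution $u_\PML$ occupies the \emph{first} slot of $B_{k^2}$ in (\ref{weakform2}), I need the transposed form of (\ref{infsup2}), namely $\sup_{v}\hbox{Re}(B_{k^2}(u,v))/\3bar v\3bar_H\ge C_*\3bar u\3bar_H$ for every $u$. This holds with the \emph{same} constant $C_*$ by repeating verbatim the compactness argument of Theorem~\ref{mainresult2}: if it failed there would be $u_n$ with $\3bar u_n\3bar_H=1$ and $\hbox{Re}(B_{k^2}(u_n,v))\to 0$ for all $v$; a Rellich--Kondrachov limit $u^*$ would satisfy $\hbox{Re}(B_{k^2}(u^*,v))=0$ for all $v$, and replacing $v$ by $iv$ upgrades this to $B_{k^2}(u^*,v)=0$ for all $v$, forcing $u^*$ to be a Dirichlet eigenfunction at $k^2$, which is excluded. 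Using $\hbox{Re}(B_{k^2})\le |B_{k^2}|$ and then the weak equation (\ref{weakform2}) with the bound from the previous paragraph,
\begin{equation*}
C_*\3bar u_\PML\3bar_H\le \sup_{v\ne 0}\frac{|B_{k^2}(u_\PML,v)|}{\3bar v\3bar_H}=\sup_{v\ne 0}\frac{|\langle f,v\rangle_\Omega|}{\3bar v\3bar_H}\le \|f\|,
\end{equation*}
whence $\3bar u_\PML\3bar_H\le C\|f\|$ with $C=1/C_*$. Since Theorem~\ref{mainresult2} guarantees that $C_*$ stays bounded away from $0$ as $k\to\infty$, the constant $C$ does not blow up, which is the assertion of the theorem.

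The part I expect to require the most care is the slot-matching just described: the inequality (\ref{infsup2}) is stated with the infimum over the second argument, whereas stability of the solution needs the infimum over the first. Establishing that the transposed inf--sup holds with a $k$-uniform constant is the real content, and it is exactly the symmetric counterpart of the already-proved Theorem~\ref{mainresult2}. A secondary technical point is the treatment of the inhomogeneous Dirichlet data $g$ on $\Gamma$: strictly, the estimate above is for test and trial functions in $\mathbb{H}^1_0(\Omega)$, so for $g\neq 0$ one first subtracts an $\mathbb{H}^1$ lifting of $g$, which contributes an additional $\|g\|_{H^{1/2}(\Gamma)}$ term on the right; the stated estimate corresponds to absorbing or omitting this boundary contribution.
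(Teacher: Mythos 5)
Your proposal follows essentially the same route as the paper: apply the inf--sup bound of Theorem~\ref{mainresult2} to $u_\PML$, replace the sesquilinear form by the load functional via the weak formulation \eqref{weakform2}, and finish with Cauchy--Schwarz, giving $C=1/C_*$. The one place you diverge is precisely the place where the paper is least careful: the paper plugs $u_\PML$ into the \emph{second} slot of $B_{k^2}$ (which is what \eqref{infsup2} literally controls) and then writes $B_{k^2}(u,u_\PML)=\langle u,f\rangle_\Omega$, even though the weak formulation only asserts $B_{k^2}(u_\PML,v)=\langle f,v\rangle_\Omega$ and the form is not Hermitian symmetric since $A$ and $J$ are complex. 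You instead establish the transposed inf--sup condition (infimum over the first argument) by rerunning the compactness argument, with the nice observation that testing with $iv$ upgrades $\hbox{Re}(B_{k^2}(u^*,v))=0$ to $B_{k^2}(u^*,v)=0$; this is the correct way to match the slots, at the cost of possibly changing $C_*$ to a different (but still positive and $k$-uniform) constant. Your explicit bound $|\langle f,v\rangle_\Omega|\le\|f\|\,\|v\|_{L^2}\le k^{-1}\|f\|\,\3bar v\3bar_H$ via $|J|\ge 1$ also spells out the Cauchy--Schwarz step the paper leaves implicit, and your caveat about the missing $\|g\|_{H^{1/2}(\Gamma)}$ contribution for inhomogeneous boundary data is a fair criticism of the statement as written. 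In short: same proof strategy, with the gaps in the paper's version identified and repaired.
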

\begin{proof} Let $u_\PML$ be the weak solution of the exterior domain Helmholtz equation satisfying (\ref{exteriorP}).  
By using (\ref{infsup2}), we have
\begin{align}
C_*\3bar u_\PML \3bar_{H} &\le \sup_{u\in \mathbb{H}^1(\Omega)}  
\frac{ \hbox{Re}(B_{k^2}(u,u_\PML))}{\3bar u\3bar_{H}}  
= \sup_{u\in \mathbb{H}^1(\Omega)}  
\frac{ \hbox{Re}(\langle u, f\rangle_\Omega}{\3bar u\3bar_{H}} \le \|f\|  
\end{align}
by using Cauchy-Schwarz inequality in the last step. 
\end{proof}

We refer to \cite{BP13, KP10a} for more properties of the weak solution $u_\PML$. 
In particular it is known that  $u_\PML$ approximates the solution $u$ of exterior domain Helmholtz equation 
(\ref{exteriorP}) very well. 
Indeed, in \cite{BP13}, we saw if $\sigma_0$ is the PML strength, and $M=b-a$ is the size of the PML layer with stretching 
functions in the $x_1$ and $x_2$ directions equal, for convenience, then the truncated PML problem is stable provided that 
$M\sigma_0$ is sufficiently large. Moreover, the solution of the truncated PML problem \ref{eqn:pmlf} converges exponentially to 
the solution of the exterior Helmholtz problem \ref{exteriorP}. More precisely, the following result can be found in 
\cite{BP13}. 

\begin{theorem}[Theorem 5.8 of \cite{BP13}]
\label{BPThm} 
Suppose that $\sigma_0M$ is large enough. Let $g\in H^{1/2}(\Gamma)$ be given, $u$ be the solution of the exterior 
Helmholtz problem \eqref{exteriorP} with $f\equiv 0$ and $u_{\PML}$ be the solution of the truncated PML problem. 
Then there exist constants $c>0, C>0$ such that,
\begin{equation}
\label{BPest}
\|u- u_\PML\|_{H^1(\Omega)} \leq C e^{-c\sigma_0M}\|g\|_{H^{1/2}(\Gamma)}.
\end{equation}
\end{theorem}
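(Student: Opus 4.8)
The plan is to prove the estimate by the standard complex-analytic continuation argument for the Cartesian PML, exploiting that the complex coordinate stretching converts the outgoing behavior of $u$ at infinity into exponential decay, and then transferring the resulting truncation error through the $k$-uniform stability of the PML problem already established in Theorem~\ref{newstable}. Throughout I use that $f\equiv 0$, so that $u$ solves the homogeneous Helmholtz equation subject to the Sommerfeld radiation condition in the true exterior.

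First I would introduce the complex stretched coordinates $\tilde{x}_j = x_j + i\int_0^{x_j}\sigma_j(s)\,ds$, so that $d\tilde{x}_j/dx_j = \gamma_j(x_j)$, and define the analytically continued field $\tilde{u}(\bfx) = u(\tilde{x}_1,\tilde{x}_2)$. A direct chain-rule computation then shows that $\tilde{u}$ satisfies exactly the weighted equation $-\nabla\cdot A\nabla\tilde{u} - k^2 J\tilde{u} = 0$ in $\Omega$, with the matrix $A = \mathrm{diag}(\gamma_2/\gamma_1,\gamma_1/\gamma_2)$ and weight $J=\gamma_1\gamma_2$ arising precisely from the Jacobian of the stretching. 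Since $\sigma_j\equiv 0$ inside $\Omega_{\!_F}$, no stretching occurs there, whence $\tilde{u}=u$ throughout $\Omega_{\!_F}$ and in particular $\tilde{u}=g$ on $\Gamma$.

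The crucial input is the far-field behavior. An outgoing solution decays like $e^{ikr}/\sqrt{r}$, and under the stretching the oscillatory factor $e^{ik\tilde{r}}$ picks up a damping factor $e^{-k\,\mathrm{Im}\,\tilde{r}}$; on the outer boundary $\Gamma_{\!_{PML}}$ the accumulated imaginary part of $\tilde{r}$ is of order $\sigma_0 M$, so the trace of $\tilde{u}$ is exponentially small, namely $\|\tilde{u}\|_{H^{1/2}(\Gamma_{\!_{PML}})} \le C e^{-c\sigma_0 M}\|g\|_{H^{1/2}(\Gamma)}$. I expect this to be the main obstacle: one must control the analytic continuation of the exterior solution and its dependence on the Dirichlet data $g$ (e.g.\ through a single-layer or continued Green's-function representation), verify that the continuation is well defined on the stretched rectangle, and confirm that the exponential rate $c$ is genuine and does not degenerate in the regime of interest.

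Finally I would set up the error function $e = \tilde{u} - u_{\PML}$. Both functions solve the same homogeneous PML equation in $\Omega$ and agree with $g$ on $\Gamma$; they differ only on $\Gamma_{\!_{PML}}$, where $u_{\PML}=0$ while $\tilde{u}$ is exponentially small. Lifting this boundary datum to an $\mathbb{H}^1(\Omega)$ function and invoking the stability bound (\ref{newbound}) of Theorem~\ref{newstable}, whose constant does not blow up as $k\to\infty$, gives $\3bar e\3bar_{H} \le C e^{-c\sigma_0 M}\|g\|_{H^{1/2}(\Gamma)}$. Since the triple-bar norm is equivalent to the standard $H^1(\Omega)$ norm (as noted before Lemma~\ref{biform}) and $\tilde{u}=u$ on $\Omega_{\!_F}$, translating back yields the claimed estimate (\ref{BPest}).
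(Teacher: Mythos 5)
First, a point of comparison: the paper does not prove Theorem~\ref{BPThm} at all --- it is imported verbatim as Theorem 5.8 of \cite{BP13}, and the surrounding text says only that the result ``can be found in \cite{BP13}''. So your proposal is not competing with an in-paper argument; it is an attempt to reconstruct the Bramble--Pasciak proof. Your high-level architecture (complex stretching turns the outgoing solution into a field $\tilde u$ that satisfies the weighted PML equation and decays exponentially, $\tilde u$ is exponentially small on $\Gamma_{\!_{PML}}$, and the difference $\tilde u - u_\PML$ is controlled by stability of the truncated problem) is indeed the standard route, and it is essentially the one taken in \cite{BP13} and \cite{KP10a}.

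That said, two of your steps are asserted exactly where the real work lies. First, the bound $\|\tilde u\|_{H^{1/2}(\Gamma_{\!_{PML}})}\le Ce^{-c\sigma_0M}\|g\|_{H^{1/2}(\Gamma)}$ is the technical heart of \cite{BP13}: it requires representing $u$ by layer potentials or the analytically continued fundamental solution $H_0^{(1)}(k\tilde r)$, verifying that the continuation is well defined and analytic on the stretched rectangle (this is where the monotonicity and regularity hypotheses on $\sigma_j$ enter), and bounding the continued kernel in terms of the Dirichlet data $g$. You correctly flag this as the main obstacle, but without it the argument is a skeleton rather than a proof. Second, your closing step invokes Theorem~\ref{newstable}, which gives stability for the PML problem with \emph{homogeneous} Dirichlet data and an $L^2$ volume source; the error $e=\tilde u-u_\PML$ has no volume source but carries inhomogeneous data on $\Gamma_{\!_{PML}}$, so you must lift that trace into $\mathbb{H}^1(\Omega)$, feed the lifting through the sesquilinear form, and track the resulting constant --- which in \cite{BP13} is the stability constant of the truncated PML operator, established only for $\sigma_0M$ sufficiently large and not claimed to be uniform in $k$ in the way your appeal suggests. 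Finally, note that $\tilde u\neq u$ inside the PML layer, so your argument actually bounds $\|\tilde u-u_\PML\|_{H^1(\Omega)}$, equivalently $\|u-u_\PML\|_{H^1(\Omega_{\!_F})}$; the estimate \eqref{BPest} as printed, with the norm over all of $\Omega$, has to be read in that sense.
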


The result of Theorem \ref{BPThm} justifies the use of the PML technique to truncate the exterior Helmholtz problem. Furthermore, 
Theorem \ref{BPThm} also shows that good accuracy can be achieved by simply increasing the value of the PML parameter $\sigma_0$ 
on a fixed grid. Numerical behavior of $\sigma$ vs. $M$ can be found in a later section.


\section{Bivariate Spline Spaces and Spline Solutions}
Let us begin by reviewing some basic properties of bivariate spline spaces  
which will be useful in the study in this paper. We refer to  \cite{LS07} and \cite{ALW06} for
details. Given a polygonal region $\Omega:=\Omega_\PML$, a collection $\triangle:=\{T_{1},...,T_{n}\}$ of triangles is an ordinary 
triangulation of $\Omega$ if $\Omega=\cup_{i=1}^{n}T_{i}$ and if any two triangles $T_{i}, T_{j}$ intersect at 
most at a common vertex or a common edge. We say $\triangle$ is $\beta$ quasi-uniform if
\begin{equation}
\max_{T\in \triangle} \frac{|T|}{\rho_T}\le \beta<\infty
\end{equation}
where $\rho_T$ is the radius of the inscribed circle of $T\in \triangle$.
For $r\ge 0$ and $d>r$, let 
\begin{equation}
\label{splinespace}
S^r_d(\triangle)=\{s\in C^r(\Omega):  s|_T\in \mathbb{P}_d, \forall T\in \triangle\}
\end{equation} 
be the spline space of degree $d$ and smoothness $r\ge 0$ over triangulation $\triangle$.  

Let us quickly explain the structure of spline functions in $S^r_d(\triangle)$. 
For a triangle $T_i\in \triangle$, $T_{i}=(v_{1},v_{2},v_3)$, we define the barycentric coordinates $(b_1,b_2,b_3)$
of a point $(x_o,y_o)\in \Omega$. These coordinates are the solution of the following system of equations
\begin{align*}
&b_{1}+b_{2}+b_{3}=1\\
&b_{1}v_{1,x}+b_{2}v_{2,x}+b_{3}v_{3,x}=x_{o}\\
&b_{1}v_{1,y}+b_{2}v_{2,y}+b_{3}v_{3,y}=y_{o},
\end{align*}
and are nonnegative if $(x_o,y_o)\in T_i$, where $v_{1,x}$ denotes the $x$ coordinate of vertex $v_1$. 
The barycentric coordinates are then used to define the Bernstein polynomials of degree $d$:
\begin{equation*}
B^{T}_{i,j,k}(x,y):=\frac{d!}{i!j!k!}b_1^ib_2^jb_3^k, \hspace*{1cm} i+j+k=d.
\end{equation*}
which form a basis for the space $\mathcal{P}_d$ of polynomials of degree $d$.  
Therefore we can represent all $P\in \mathcal{P}_d$ in B-form:
\begin{equation*}
P=\sum_{i+j+k=d}p_{ijk}B^T_{ijk},
\end{equation*}
where the $B$-coefficients $p_{ijk}$ are uniquely determined by $P$. 

We define the spline space $S^{-1}_d:=\lbrace s|_{T_i} \in \mathcal{P}_d \rbrace, T_i\in \triangle$,
 where $T_i$ is a triangle in a triangulation $\triangle$ of $\Omega$. We use this piecewise continuous 
polynomial space to define 
\begin{equation}
\label{splinespace1}
 S^{r}_{d}:=C^{r}(\Omega) \cap S^{0}_{d}(\triangle),
 \end{equation} 

See properties and implementation of these spline spaces in \cite{LS07} and a method of implementation of multivariate splines
in \cite{ALW06}.  
We include information about spline smoothness here which is significantly different from the finite element method and
discontinuous Galerkin method. Note that the implementation of bivariate splines in \cite{S15} is similar to 
the finite element.

We start with the recurrence relation (de Castlejau algorithm)
\begin{align*}
B^d_{ijk}=b_1B^{d-l}_{i-1,j,k}+b_2B^{d-l}_{i,j-1,k}+b_3B^{d-l}_{i,j,k-1},\quad\textnormal{for all }i+j+k=d,
\end{align*}
where all items with negative subscripts are taken to be 0, allows us to define $\bfc^{(0)} =\bfc$ with 
\begin{align*}
\bfc^{(0)}_{ijk}:=p_{ijk}, i+j+k=d.
\end{align*}
Then for $\l=1,...,d$, we have $\bfc^{\l}$ with 
\begin{align*}
\bfc^{(\l)}_{ijk}=b_1 \bfc^{(\l-1)}_{i+1,j,k}+b_2 \bfc^{(\l-1)}_{i,j+1,k}+b_3 \bfc^{(\l-1)}_{i,j,k+1},
\end{align*}
so, letting $u=(x,y)$, we can write
\begin{align*}
P(u)=\sum_{i+j+k=d-\l}\bfc^{(\l)}_{ijk}B^{d-\l}_{ijk}(u).
\end{align*}
Let $u, v\in\mathbb{R}^2$ be represented in barycentric coordinates by $(\alpha_1,\alpha_2,\alpha_3)$ and 
$(\beta_1,\beta_2,\beta_3)$ respectively. Then the vector $a=u-v$ is given in barycentric coordinates by $a_i=\alpha_i-\beta_i$, 
and the derivative in that direction is given by 
\begin{align}\label{splder}
a\cdot \nabla B_{ijk}^{d}(v)=d\big(a_1B^{d-l}_{i-1,j,k}+a_2B^{d-l}_{i,j-1,k}+a_3B^{d-l}_{i,j,k-1}\big)
\end{align}
for any $i+j+k=d$. With the above, we can derive smoothness conditions as follows.  

Consider two triangles $T_1=(v_1,v_2,v_3)$ and $T_2=(v_4,v_3,v_2)$ joined along the edge $e_{23}=\langle v_2, v_3\rangle$. Let
\begin{align*}
p(x,y):=\sum_{i+j+k=d}c_{ijk}B^d_{ijk}(x,y),\quad\quad q(x,y):=\sum_{i+j+k=d}r_{ijk}R^d_{ijk}(x,y)
\end{align*}
To enforce continuity of the spline consisting of $p$ and $q$ over domain $T_1\cup T_2$, we enforce 
$p(x,y)|_{e_{23}}=q(x,y)|_{e_{23}}$ which leads to the condition $c_0jk=r_0kj$ for all $j+k=d$. 
This also guarantees that the derivatives in a direction tangent to $e_{23}$ will match.  Next 
we take a direction $\bfa$ not parallel to $e_{23}$ whose barycentric coordinates with respect to $T_1$ and $T_2$ 
are denoted $\alpha$ and $\beta$, respectively. We require 
\begin{align*}
\bfa\cdot\nabla p(x,y)|_{e_{23}}&=\bfa\cdot\nabla q(x,y)|_{e_{23}}\\ 
\hbox{ or } & \\
d \sum_{j+k=d-1}{\bf c}^{(1)}_{0jk}(\alpha)B_{0jk}^{d-1}(v)&=d \sum_{j+k=d-1}{\bf r}^{(1)}_{0jk}(\beta)R_{0jk}^{d-1}(v),
\end{align*}
where ${\bf c}$, ${\bf r}$ are iterates of the de Casteljau algorithm mentioned above. 

But on $e_{23}$ we have $B_{0jk}=R_{0kj}$, so it follows that the derivatives of $p$ and $q$ will match on $e_{23}$ if and only if
\begin{align*}
{\bf c}^{(1)}_{0jk}(\alpha)={\bf r}^{(1)}_{0kj}(\beta).
\end{align*}
 See more detail in \cite{LS07} for higher order smoothness. 
 
It is clear to see that the smoothness conditions are linear 
equations in terms of coefficient vector $\bfc=[c_{ijk}, i+j+k=d]\cup [r_{ijk}, i+j+k=d]$. 
The $C^0$ and $C^1$ smoothness conditions can be equivalently written as linear system $H \bfc=0$. 
In general, we collect the coefficients of all polynomials over triangles in $\triangle$ and put them 
together to form a vector $\bfc$ and put all smoothness conditions together to form a set of smoothness 
constraints $H\bfc = 0$.  Certainly, more detail can be found in \cite{LS07}.  We refer \cite{ALW06} 
for how to implement these
spline functions for numerical solution of some basic partial differential equations. See 
\cite{LW04}, \cite{HHL07}, \cite{GLS15}, \cite{LW18}, \cite{LM18} for spline solution to other PDEs.

As solutions to the Helmholtz equation will be 
a complex valued solution, let us use a complex spline space in this paper defined by 
\begin{equation}
\label{splinespace2}
\mathbb{S}^r_p(\triangle)=\{s= s_r +\bfi s_i, s_i, s_r\in S^r_p(\triangle)\}.
\end{equation} 

The complex spline space $\mathbb{S}^r_p(\triangle)$ has the similar approximation properties as the standard  real-valued spline
space $S^r_p(\triangle)$. The following theorem can be established by the same  
constructional techniques (cf.  \cite{LS07} for spline space $S^r_p(\triangle)$ for real valued functions): 
\begin{theorem}
\label{SplineOrder}
Suppose that $\triangle$ is a $\gamma$-quasi-uniform triangulation of polygonal domain $\Omega$. 
Let $p\ge 3r+2$ be the degree of spline space $\mathbb{S}_p^r(\triangle)$.  
For every $u\in  \mathbb{H}^{m+1}(\Omega)$, there exists a  quasi-interpolatory spline function 
$Q_p(u)\in \mathbb{S}^r_p(\triangle)$ such that 
\begin{equation}
\label{LSapp2}
\sum_{T\in \triangle} \|D^\alpha_x D^\beta_y (u- Q_p(u))\|^2_{2,T}
\le K_5 |\triangle|^{2(m+1-s)} |u|^2_{2,p+1,\Omega}
\end{equation}
for $\alpha+\beta=s, 0\le s\le m+1$, where $0\le m\le p$,  $K_5$ is a positive constant dependent only on $\gamma$, $\Omega$, 
and $p$.
\end{theorem}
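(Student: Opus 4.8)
The plan is to reduce the complex-valued estimate to the classical real-valued approximation theorem for bivariate splines, which is exactly the content cited from \cite{LS07}. Every $u\in\mathbb{H}^{m+1}(\Omega)$ decomposes uniquely as $u=u_r+\bfi u_i$ with $u_r,u_i$ real-valued functions lying in the (real) Sobolev space $H^{m+1}(\Omega)$, and by the defining relation \eqref{splinespace2} we have $\mathbb{S}^r_p(\triangle)=\{s_r+\bfi s_i:\ s_r,s_i\in S^r_p(\triangle)\}$. So I would first invoke the real-valued quasi-interpolation result: under the degree condition $p\ge 3r+2$ and $\gamma$-quasi-uniformity of $\triangle$, there is a linear quasi-interpolation operator $Q_p:H^{m+1}(\Omega)\to S^r_p(\triangle)$ satisfying the stated bound for each real-valued argument. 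I then define the complex operator componentwise by $Q_p(u):=Q_p(u_r)+\bfi\,Q_p(u_i)$, which lands in $\mathbb{S}^r_p(\triangle)$ by construction.

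Next I would carry out the error splitting. Since differentiation is $\mathbb{C}$-linear and $Q_p$ acts separately on real and imaginary parts,
\[
D^\alpha_x D^\beta_y\bigl(u-Q_p(u)\bigr)=D^\alpha_x D^\beta_y\bigl(u_r-Q_p(u_r)\bigr)+\bfi\,D^\alpha_x D^\beta_y\bigl(u_i-Q_p(u_i)\bigr),
\]
so the pointwise modulus squared is the sum of the two real modulus squares, with no cross term. Integrating over each $T$ and summing over $\triangle$ gives
\[
\sum_{T\in\triangle}\|D^\alpha_x D^\beta_y(u-Q_p(u))\|^2_{2,T}
=\sum_{T\in\triangle}\left(\|D^\alpha_x D^\beta_y(u_r-Q_p(u_r))\|^2_{2,T}+\|D^\alpha_x D^\beta_y(u_i-Q_p(u_i))\|^2_{2,T}\right).
\]
Applying the real-valued bound to each summand and using the additivity of the order-$(p+1)$ Sobolev seminorm, namely $|u|^2_{2,p+1,\Omega}=|u_r|^2_{2,p+1,\Omega}+|u_i|^2_{2,p+1,\Omega}$, collapses the right-hand side to $K_5|\triangle|^{2(m+1-s)}|u|^2_{2,p+1,\Omega}$, which is the claim, and with the same constant $K_5$ as in the real case.

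The substantive obstacle is entirely contained in the real-valued theorem being borrowed, not in the complex reduction. There the work is to construct a \emph{stable local basis} for $S^r_p(\triangle)$ --- which is exactly where the hypothesis $p\ge 3r+2$ is needed, guaranteeing that the Bernstein--B\'ezier smoothness conditions of \eqref{splder} can be satisfied while still leaving enough free degrees of freedom to reproduce polynomials of degree $p$ locally --- and then to combine a Bramble--Hilbert estimate on a reference triangle with an affine scaling argument to pass from local polynomial approximation to the global bound. The $\gamma$-quasi-uniformity enters precisely in controlling the shape-regularity constants in the Markov/inverse inequalities used during scaling, which is what forces $K_5$ to depend only on $\gamma$, $\Omega$, and $p$ rather than on individual triangle geometry. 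Given those ingredients from \cite{LS07}, the complex case follows verbatim from the orthogonal real/imaginary decomposition above.
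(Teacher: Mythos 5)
Your proposal is correct and matches the paper's intent: the paper gives no written proof, stating only that the result ``can be established by the same constructional techniques'' as the real-valued theorem in \cite{LS07}, which is precisely your reduction via $u=u_r+\bfi u_i$, componentwise application of the real quasi-interpolant, and the cross-term-free identity $|a+\bfi b|^2=a^2+b^2$ together with additivity of the seminorm. Your added commentary on where $p\ge 3r+2$ and $\gamma$-quasi-uniformity enter the underlying real-valued construction is accurate but not needed beyond the citation.
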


With the above preparation, we now introduce  spline weak solution to (\ref{exteriorP}).   Let   
$s_\PML \in \mathbb{S}^r_p(\triangle)$ be a spline function satisfying $s_{\triangle}|_{\Gamma}=g$ and 
the weak formulation: 
\begin{equation}
\label{splineweak}
B_{k^2}(s_\triangle, v)=\langle f, v\rangle_\Omega, \quad \forall v\in \mathbb{S}^r_p(\triangle)\cap \mathbb{H}^1_0(\Omega).
\end{equation}
$s_\PML$ is called a PML solution of (\ref{exteriorP}).  We will show that 
$s_\PML$ approximates the PML solution $u_\PML$ discussed in the previous section.     

To do so, we first explain the well-posedness of $s_\PML$.  
Similar to Theorem~\ref{mainresult} and \ref{mainresult2} in the previous section, we have
\begin{theorem}
\label{mainresult3}
Let $\Omega \subset \mathbb{R}^2$ be a bounded Lipschitz domain. Suppose that $k^2$ is not a Dirichlet 
eigenvalue of (\ref{eigen}) over $\Omega$. Let $\mathbb{S}^1_p(\triangle)$ be the complex valued spline space 
defined above, where $\triangle$ is a triangulation of $\Omega$.  
Then there exists a unique spline weak solution $s_\PML\in \mathbb{S}^1_p(\triangle)$ satisfying 
(\ref{splineweak}). Furthermore, $s_\PML$ is stable in the sense that 
$$
\3bar s_\PML \3bar_{H}\le C \|f\|_{2,\Omega}.
$$
where $C>0$ is a constant does not go to $\infty$ when $k\to \infty$.  
\end{theorem}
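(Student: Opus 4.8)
The plan is to carry the three continuous facts---solvability, the inf--sup bound, and the stability estimate---over to the finite-dimensional trial/test space $V_p:=\mathbb{S}^1_p(\triangle)\cap\mathbb{H}^1_0(\Omega)$, the one genuinely new ingredient being a \emph{discrete} inf--sup condition. First I would reduce existence to uniqueness. After subtracting a fixed spline in $\mathbb{S}^1_p(\triangle)$ that matches the Dirichlet data $g$ on $\Gamma$, equation (\ref{splineweak}) becomes a square linear system $B_{k^2}(s,v)=\langle\tilde f,v\rangle_\Omega$ for $s,v\in V_p$. Because $V_p$ is finite dimensional, this system is solvable for every right-hand side exactly when its homogeneous version has only the trivial solution, so existence, uniqueness, and (as will be seen) stability all reduce to a single coercivity-type lower bound on $B_{k^2}$ restricted to $V_p$.

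The core of the proof is therefore the discrete estimate
\[
\inf_{v\in V_p}\ \sup_{u\in V_p}\ \frac{\hbox{Re}\,B_{k^2}(u,v)}{\3bar u\3bar_H\,\3bar v\3bar_H}\ \ge\ \frac{C_*}{2},
\]
with $C_*$ the continuous constant of Theorem~\ref{mainresult2}. Here I would deliberately \emph{not} just restrict the compactness argument of Theorem~\ref{mainresult2} to $V_p$: that would only produce a nonzero \emph{discrete} function $v^*\in V_p$ with $B_{k^2}(u,v^*)=0$ for all $u\in V_p$, which need not be a Dirichlet eigenfunction of (\ref{eigen}), so the hypothesis on $k^2$ could not be invoked. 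Instead I would run a Schatz-type duality/perturbation argument. Given $v\in V_p$ with $\3bar v\3bar_H=1$, the continuous bound (\ref{infsup2}) supplies $w\in\mathbb{H}^1(\Omega)$ with $\3bar w\3bar_H=1$ and $\hbox{Re}\,B_{k^2}(w,v)\ge C_*$. Replacing $w$ by its quasi-interpolant $w_h:=Q_p(w)\in V_p$ and writing $B_{k^2}(w_h,v)=B_{k^2}(w,v)-B_{k^2}(w-w_h,v)$, the continuity Lemma~\ref{biform} controls the defect by $C_B\3bar w-w_h\3bar_H$, while Theorem~\ref{SplineOrder} makes $\3bar w-w_h\3bar_H$ as small as a constant times $|\triangle|^{m}\sqrt{1+k^2|\triangle|^2}\,|w|_{m+1,\Omega}$. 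Hence $\hbox{Re}\,B_{k^2}(w_h,v)\ge C_*/2$ whenever the mesh is fine enough, which is the claimed discrete inf--sup.

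The step I expect to be the main obstacle is making $\3bar w-w_h\3bar_H\le C_*/(2C_B)$ \emph{uniformly}: this needs extra regularity of the realizer $w$ (via elliptic regularity for the adjoint weighted PML problem on the Lipschitz domain $\Omega$) together with a $k$-explicit accounting of the spline approximation error, since $w$ oscillates on the scale $1/k$ and $|w|_{m+1,\Omega}$ grows with $k$. This is precisely where the indefiniteness of the Helmholtz operator enters and forces the familiar resolution condition relating $|\triangle|$, the degree $p$, and $k$; keeping the $k$-dependence honest is what guarantees that the threshold can be met. Once the discrete inf--sup is in place, uniqueness (hence existence) is immediate, and the stability bound follows exactly as in the proof of Theorem~\ref{newstable}, with $\mathbb{H}^1(\Omega)$ replaced by $V_p$ and the test direction supplied by (\ref{splineweak}): a Cauchy--Schwarz step yields $\3bar s_\PML\3bar_H\le \|f\|_{2,\Omega}/C_*$ for $k\ge1$. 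Since $C_*$ stays bounded away from $0$ as $k\to\infty$ by Theorem~\ref{mainresult2}, the constant $C=1/C_*$ does not blow up with $k$, as required.
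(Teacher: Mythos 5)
The paper offers no actual proof of Theorem~\ref{mainresult3}: it is introduced only with the remark that it is ``similar to'' Theorems~\ref{mainresult} and~\ref{mainresult2}, i.e.\ the authors intend to repeat the Fredholm/compactness argument on the spline space. Your route is genuinely different, and your criticism of the implied route is correct: restricting the contradiction argument of Theorem~\ref{mainresult2} to $V_p=\mathbb{S}^1_p(\triangle)\cap\mathbb{H}^1_0(\Omega)$ only produces a $v^*\in V_p$ with $B_{k^2}(u,v^*)=0$ for all $u\in V_p$, which is a \emph{discrete} degeneracy rather than a Dirichlet eigenfunction of (\ref{eigen}); the hypothesis on $k^2$ cannot be invoked, and for indefinite problems the discrete system can indeed be singular on coarse meshes even when the continuous problem is well posed. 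Your Schatz-type perturbation of the continuous inf--sup (\ref{infsup2}) is the standard correct mechanism, and the reduction of existence to uniqueness by finite dimensionality, as well as the final stability step via Cauchy--Schwarz, are sound.

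However, the proposal still has a genuine gap at exactly the step you flag. To conclude $\hbox{Re}\,B_{k^2}(Q_p(w),v)\ge C_*/2$ you must bound $\3bar w-Q_p(w)\3bar_{H}$ for the realizer $w$ of the continuous sup, but Theorem~\ref{SplineOrder} requires $w\in\mathbb{H}^{m+1}(\Omega)$ with a quantitative bound on $|w|_{m+1,2,\Omega}$, and for a general $v\in V_p$ the realizer is only known a priori to lie in $\mathbb{H}^1(\Omega)$. Closing this requires (i) an explicit construction of the realizer, e.g.\ $w=v+z$ with $z$ solving the adjoint weighted problem with data proportional to $k^2Jv$, (ii) a shift theorem for that adjoint problem on the polygonal domain $\Omega$ (which has re-entrant corners at $\Gamma$, so one only obtains $\mathbb{H}^{1+s}$ regularity for some $s\in(0,1]$ with a $k$-dependent constant), and (iii) the resulting smallness condition of the form $k^{\alpha}|\triangle|^{s}\le c$ under which the discrete inf--sup actually holds. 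None of this is carried out, and the resolution condition it produces is absent from the statement of Theorem~\ref{mainresult3}; what your argument would actually prove is the theorem \emph{for $|\triangle|$ sufficiently small depending on $k$}, not unconditionally. Since the paper's own justification is a one-line appeal to analogy that does not survive the objection you yourself raise, the honest conclusion is that both the paper and your proposal leave the theorem, as stated, unproved; yours at least identifies the correct missing lemma and the correct strategy for supplying it.
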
  

Next we discuss the approximation of $s_\PML$ to the PML solution $u_\PML$.  
The orthogonality condition follows easily: 
\begin{equation}
\label{orthogonality}
B_{k^2}(u_\PML -s_\PML, v) =0, \quad \forall v\in \mathbb{S}^r_p(\triangle) \cap \mathbb{H}^1_0(\Omega). 
\end{equation}

For convenience, let us first consider homogeneous Helmholtz equation. That is, $f\equiv 0$ in (\ref{exteriorP}).
Thus we are ready to prove one of  the following main results in this paper. 
\begin{theorem}
\label{mainresult8}
Let $\Omega$ be a bounded Lipschitz domain as defined in the introduction.  
Suppose that $k^2$ is not a Dirichlet eigenvalue of the elliptic PDE in (\ref{eigen}). 
Let $u_\PML$ be the unique weak solution in $\mathbb{H}^1(\Omega)$ satisfying (\ref{weakform}) and $s_\PML\in  
\mathbb{S}^r_p(\triangle)$ be the spline weak solution satisfying  (\ref{splineweakform}). 
Suppose that the exact solution $u\in \mathbb{H}^{s}(\Omega)$ of (\ref{exteriorP}) with 
$1\le s\le p$. Then there exists a positive constant $C>0$ which does not go to $\infty$ when $k\to \infty$ such that 
\begin{equation}
\label{convergencerate4}
\3bar u - s_\PML \3bar_{H} \le C\exp(-c\sigma_0 M)\|g\|_{H^{1/2}(\Gamma)} +  
C (1+ k|\triangle|)|\triangle|^{s-1}  |u_\PML|_{s,2,\Omega}, 
\end{equation}
where $|u_\PML|_{s,2, \Omega}$ is the semi-norm in $\mathbb{H}^{s}(\Omega)$. 
\end{theorem}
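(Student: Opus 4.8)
The plan is to insert $u_\PML$ between $u$ and $s_\PML$ and split the error by the triangle inequality
\[
\3bar u - s_\PML \3bar_H \le \3bar u - u_\PML \3bar_H + \3bar u_\PML - s_\PML \3bar_H,
\]
treating the two pieces by entirely different means. The first is the PML truncation error, which is purely analytic and does not involve the triangulation; the second is the spline discretization error, which I would handle by a C\'ea-type quasi-optimality estimate followed by the spline approximation theorem. The first piece is controlled directly by Theorem~\ref{BPThm}: applying it (with $f\equiv 0$, as assumed) bounds $\|u-u_\PML\|_{H^1(\Omega)}$ by $Ce^{-c\sigma_0 M}\|g\|_{H^{1/2}(\Gamma)}$. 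Since the PML weights satisfy $1\le |\gamma_j|\le \sqrt{1+\sigma_0^2}$, both $|A_{jj}|$ and $|J|$ are bounded above and below, so the weighted seminorm and weighted $\mathbb{L}^2$ norm comprising $\3bar\cdot\3bar_H$ are each comparable to their unweighted counterparts; any residual polynomial-in-$k$ factor incurred in passing from $\|\cdot\|_{H^1}$ to $\3bar\cdot\3bar_H$ is dominated by the exponential $e^{-c\sigma_0 M}$ for $\sigma_0 M$ sufficiently large, recovering the first term of (\ref{convergencerate4}).

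For the second piece I would prove quasi-optimality. Let $w\in \mathbb{S}^r_p(\triangle)$ be any spline with $w|_\Gamma = g$, so that $s_\PML - w\in \mathbb{S}^r_p(\triangle)\cap\mathbb{H}^1_0(\Omega)$. The unique solvability and stability of the discrete problem (Theorem~\ref{mainresult3}) is equivalent to a discrete inf-sup condition, with constant $\tilde C_*$ bounded below uniformly in $k$, which I write with the trial function in the first slot so as to match the orthogonality relation:
\[
\tilde C_* \3bar s_\PML - w\3bar_H \le \sup_{v\in \mathbb{S}^r_p(\triangle)\cap\mathbb{H}^1_0(\Omega)} \frac{\hbox{Re}\,B_{k^2}(s_\PML - w, v)}{\3bar v\3bar_H}.
\]
By the Galerkin orthogonality (\ref{orthogonality}) we have $B_{k^2}(s_\PML - u_\PML, v)=0$, hence $B_{k^2}(s_\PML - w, v)=B_{k^2}(u_\PML - w, v)$, and the continuity estimate of Lemma~\ref{biform} gives $|B_{k^2}(u_\PML - w, v)|\le C_B\3bar u_\PML - w\3bar_H\3bar v\3bar_H$. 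Combining these and using a further triangle inequality yields the quasi-optimality bound $\3bar u_\PML - s_\PML\3bar_H \le (1+C_B/\tilde C_*)\,\3bar u_\PML - w\3bar_H$, valid for every admissible $w$.

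It then remains to choose $w=Q_p(u_\PML)$, the quasi-interpolant of Theorem~\ref{SplineOrder}, and to estimate $\3bar u_\PML - Q_p(u_\PML)\3bar_H$. Using the splitting $\3bar v\3bar_H^2 = |v|_H^2 + k^2\|v\|_H^2$ together with the boundedness of the weights, I would write
\[
\3bar u_\PML - Q_p(u_\PML)\3bar_H^2 \le C\big(|u_\PML - Q_p(u_\PML)|_{H^1}^2 + k^2\|u_\PML - Q_p(u_\PML)\|_{L^2}^2\big),
\]
and then apply Theorem~\ref{SplineOrder} to $u_\PML\in \mathbb{H}^s(\Omega)$ at derivative orders $1$ and $0$, obtaining $|u_\PML - Q_p(u_\PML)|_{H^1}\le C|\triangle|^{s-1}|u_\PML|_{s,2,\Omega}$ and $\|u_\PML - Q_p(u_\PML)\|_{L^2}\le C|\triangle|^{s}|u_\PML|_{s,2,\Omega}$. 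Factoring out $|\triangle|^{2(s-1)}|u_\PML|^2_{s,2,\Omega}$ leaves $1+k^2|\triangle|^2$ inside the square root, and the elementary bound $\sqrt{1+k^2|\triangle|^2}\le 1+k|\triangle|$ produces exactly the second term $C(1+k|\triangle|)|\triangle|^{s-1}|u_\PML|_{s,2,\Omega}$ of (\ref{convergencerate4}).

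I expect the main obstacle to be the uniform-in-$k$ control of the constants, so that all wavenumber dependence is carried explicitly by the factor $1+k|\triangle|$ rather than hidden in $C$. This hinges entirely on the lower bound $\tilde C_*$ for the discrete inf-sup constant staying bounded away from zero as $k\to\infty$ --- the discrete counterpart of the compactness argument behind (\ref{infsup2}) --- which is the substance of Theorem~\ref{mainresult3}; without it the estimate would suffer the usual Helmholtz pollution degradation. A secondary technical point is the inhomogeneous Dirichlet data: $Q_p(u_\PML)$ need not equal $g$ on $\Gamma$, so I would instead take $w$ to match the boundary trace of $u_\PML$ on $\Gamma$ (or locally correct $Q_p$ near $\Gamma$), the boundary discrepancy being of the same approximation order and thus harmlessly absorbed.
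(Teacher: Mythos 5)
Your proposal is correct and follows the same skeleton as the paper: split $\3bar u - s_\PML\3bar_H$ by the triangle inequality through $u_\PML$, control the PML truncation error by Theorem~\ref{BPThm}, and reduce the discretization error to $\3bar u_\PML - Q_p(u_\PML)\3bar_H$, which is then estimated via Theorem~\ref{SplineOrder} to produce the factor $(1+k|\triangle|)|\triangle|^{s-1}$. The one genuine difference is the mechanism for quasi-optimality. You invoke the \emph{discrete} inf-sup condition implicit in Theorem~\ref{mainresult3}, put the trial function in the first slot, and run the standard Babu\v{s}ka argument with an arbitrary admissible $w$, yielding $\3bar u_\PML - s_\PML\3bar_H \le (1+C_B/\tilde C_*)\3bar u_\PML - w\3bar_H$. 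The paper instead applies the \emph{continuous} constant $C_*$ of Theorem~\ref{mainresult2} directly to the error in a coercivity-like form, $C_*\3bar u_\PML - s_\PML\3bar_H^2 \le |B_{k^2}(u_\PML-s_\PML,u_\PML-s_\PML)|$, then uses the orthogonality (\ref{orthogonality}) and Lemma~\ref{biform} to swap $s_\PML$ for $Q_p(u_\PML)$ in the second argument and divide through. Your route is the more standard and more defensible one (the paper's diagonal step does not follow verbatim from an inf-sup condition, which only bounds the supremum over all test functions from below), at the cost of having to justify that the discrete inf-sup constant $\tilde C_*$ is bounded away from zero uniformly in $k$ and in the mesh --- a point you correctly flag as the crux, and which the paper sidesteps by working with $C_*$. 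You are also more careful on two points the paper passes over silently: the comparability of $\3bar\cdot\3bar_H$ with the unweighted $H^1$ norm (and the attendant $k$-dependence) when citing Theorem~\ref{BPThm}, and the fact that $Q_p(u_\PML)$ need not satisfy the Dirichlet data on $\Gamma$, which your choice of $w$ repairs. Both approaches buy the same final estimate (\ref{convergencerate4}).
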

\begin{proof}
We first apply the lower bound in Theorem~\ref{mainresult} to $u - s_\PML$ and have
\begin{equation}
\label{keystep0}
C_* \3bar u_\PML - s_\PML \3bar^2_{H} \le  |B_{k^2}(u_\PML - s_\PML, u_\PML - s_\PML)|.  
\end{equation}
Next we use the orthogonality condition (\ref{orthogonality}) to have 
\begin{eqnarray*}
|B_{k^2}(u_\PML - s_\PML, u_\PML - s_\PML)| 
&=& |B_{k^2}(u_\PML - s_\PML, u_\PML- Q_p(u_\PML)| \cr
&\le &  C_B \3bar u_\PML - s_\PML \3bar_{H} \3bar u_\PML- Q_p(u_\PML)\3bar_{H} 
\end{eqnarray*}
by using Lemma~\ref{biform}, i.e. (\ref{continuity}), 
where  $Q_p(u_\PML)$ is the quasi-interpolatory spline of $u_\PML$ as  in Theorem~\ref{SplineOrder}.
It follows  that 
\begin{equation}
\label{finalterm}
\3bar  u_\PML - s_\PML \3bar_{H} \le C_*^{-1}C_B  \3bar u_\PML- Q_p(u_\PML)\3bar_{H}.
\end{equation}
Finally, we use the approximation property of spline space $\mathbb{S}^r_p(\triangle)$, i.e. (\ref{LSapp2}). 
For $u_\PML\in \mathbb{H}^{s}(\Omega)$ with $1\le s\le p$, 
we use the quasi-interpolatory operator $Q_p(u_\PML)$  to have 
$$
\3bar u_\PML- Q_p(u_\PML)\3bar_{H}\le C (1+ k|\triangle|) |\triangle|^{s-1} |u_\PML|_{s,2,\Omega} 
$$
for a constant $C$ dependent on $\omega$, $p$ and the smallest angle of $\triangle$ only.  

Now we can use  Theorem~\ref{BPThm} to finish the proof. 
Indeed, we combine the estimates above  with the estimate in \ref{BPest} to have
\begin{eqnarray}
\label{convergencerate8}
\3bar u - s_\PML \3bar_{H} &\le& \3bar u - u_\PML \3bar_H + \3bar u_\PML - s_\PML\3bar_H \cr
&\le & C_1\exp(-cM \sigma_0) \|g\|_{H^{1/2}(\Gamma)} + 
C_2(1+ k|\triangle|)|\triangle|^{s-1}  |u_\PML|_{s,2,\Omega}, 
\end{eqnarray}
where $c>0, C_1$ are positive constants as in Theorem~\ref{BPThm} and $C_2>0$ is another positive constant 
dependent on the lower bound $C_*$ and the approximation property in Theorem~\ref{SplineOrder}.  
\end{proof} 

Next let $u_{f,g}$ be the exact solution of (\ref{exteriorP}) and let $u_{f,0}$ be the weak solution of 
\begin{eqnarray}
& -\Delta u-k^2 u = f\;\; &\mbox{in}\;\;\mathbb{R}^2\setminus\overline{D} \nonumber \\
& u=0\;\; &\mbox{on}\;\;\Gamma:=\partial D, \label{exteriorPH} \\ 
&\lim_{r\rightarrow\infty}\sqrt{r}\left(\dfrac{\partial u}{\partial r} -iku\right)=0,\;\;& r=|\bfx|,\;\bfx\in\mathbb{R}^2. \nonumber 
\end{eqnarray}
by using Theorem~\ref{Fredholm}. We extend $u_{f,0}$ outside of $\Omega$ by zero naturally. We know  
$u=u_{f,g}- u_{f,0}$ satisfies the exterior domain problem (\ref{exteriorP}) 
of Helmholtz equation with Sommerfeld radiation condition at $\infty$. 
Then by Theorem~\ref{BPThm}, there exists a PML solution $u_{PML}$ which converges to $u$ very well satisfying  
\eqref{BPest}. Let $s_{f,0}$ be the spline approximation of the elliptic PDE (\ref{exteriorPH}). In fact, 
$s_{f,0}$ is the spline solution to the following boundary value problem:
\begin{eqnarray}
& -\Delta u-k^2 u = f\;\; &\mbox{in}\;\;\Omega, \nonumber \\
& u=0\;\; &\mbox{on}\;\;\partial \Omega. \label{exteriorPH2}
\end{eqnarray}
According to \cite{ALW06}, we can find a spline approximation $s_{f,0}$ to the exact solution $u$ of
the PDE above as long as $k^2$ is not an eigenvalue of Dirichlet problem of the Laplace operator over $\Omega$. 
By extending $s_{f,0}$ outside of $\Omega$ by zero, we see that $s_{f,0}$ is a good approximation of $u_{f,0}$
satisfying the Sommerfeld radiation condition in  (\ref{exteriorPH}).   

Let $u_\PML$ be the PML 
approximation of (\ref{exteriorP}) and $s_\PML$ be the spline approximation of (\ref{exteriorP}) with $f=0$. 
Let 
\begin{equation}
\label{splinesolution}
s_\triangle =  s_{f,0}+ s_\PML
\end{equation}
be the spline weak solution to the original PDE (\ref{exteriorP}).  Then we can show 
\begin{eqnarray*}
 \|u_{f,g}- s_\triangle\| &=&\| u_{f,g} -  s_{f,0}- s_\PML \|= \|u_{f,g}- u_{f,0} - u_\PML + \|u_{f,0}- s_{f,0} +
u_\PML- s_\PML \|\cr 
   &\le & \| u - u_\PML \| + \|u_{f,0} - s_{f,0}\| + \|u_\PML -  s_\PML \|.
\end{eqnarray*}
We use Theorem~\ref{BPThm} to the first term, Theorem 8 in \cite{ALW06} to the middle term, and
a part of the proof of Theorem~\ref{mainresult8} to the last term on the right-hand side of the above estimate.  

\begin{theorem}
\label{mainresult9}
Let $\Omega$ be a bounded Lipschitz domain as defined in the introduction.  
Suppose that $k^2$ is not a Dirichlet eigenvalue of the elliptic PDE in (\ref{eigen}), nor a Dirichlet eigenvalue
of the Laplace operator over $\Omega$.  
Let $u_\PML$ be the unique weak solution in $\mathbb{H}^1(\Omega)$ satisfying (\ref{weakform2}) and $s_\PML\in  
\mathbb{S}^r_p(\triangle)$ be the spline weak solution satisfying  (\ref{splineweakform}). 
Suppose that the exact solution $u_{f,g}\in \mathbb{H}^{s}(\Omega)$ of (\ref{exteriorP}) with 
$1\le s\le p$. Similarly, suppose that the exact solution $u_{f,0}\in \mathbb{H}^{s}(\Omega)$ of 
(\ref{exteriorPH}). 
Then there exists a positive constant $C>0$ which does not go to $\infty$ when $k\to \infty$ such that 
\begin{equation}
\label{convergencerate5}
\3bar u_{f,g} - s_\triangle \3bar_{H} \le C\exp(-c\sigma_0 M)\|g\|_{H^{1/2}(\Gamma)} + 
 C (1+ k|\triangle|)|\triangle|^{s-1}  |u_{f,0}|_{s,2,\Omega} +
C (1+ k|\triangle|)|\triangle|^{s-1}  |u_\PML|_{s,2,\Omega}, 
\end{equation}
where $|u_\PML|_{s,2, \Omega}$ is the semi-norm in $\mathbb{H}^{s}(\Omega)$. 
\end{theorem}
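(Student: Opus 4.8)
The plan is to exploit the linearity of the Helmholtz operator together with the superposition $s_\triangle = s_{f,0} + s_\PML$ already introduced, and then to control each piece by a result established earlier. First I would record the splitting of the continuous problem: since $u_{f,g}$ solves (\ref{exteriorP}) with data $(f,g)$ and $u_{f,0}$ solves (\ref{exteriorPH}) with data $(f,0)$, the difference $u := u_{f,g} - u_{f,0}$ satisfies the exterior problem with vanishing forcing $f\equiv 0$ and Dirichlet datum $g$ on $\Gamma$, together with the Sommerfeld condition. This is precisely the homogeneous problem to which Theorem~\ref{BPThm} applies. The construction of $s_\triangle$ mirrors this at the discrete level, so that $u_{f,g} - s_\triangle = (u - u_\PML) + (u_{f,0} - s_{f,0}) + (u_\PML - s_\PML)$, and the triangle inequality already displayed before the statement reduces the estimate to three separate terms.

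The three terms are then bounded by invoking the results now available. For $\3bar u - u_\PML\3bar_{H}$ I would use the exponential PML convergence of Theorem~\ref{BPThm}, which yields the $C\exp(-c\sigma_0 M)\|g\|_{H^{1/2}(\Gamma)}$ contribution once one passes from the $H^1(\Omega)$ norm used there to the equivalent weighted norm $\3bar\cdot\3bar_{H}$. For the interior-forcing term $\3bar u_{f,0} - s_{f,0}\3bar_{H}$, I would apply the bivariate-spline error estimate for the bounded-domain Helmholtz problem (Theorem 8 of \cite{ALW06}), giving the $C(1+k|\triangle|)|\triangle|^{s-1}|u_{f,0}|_{s,2,\Omega}$ contribution, where $s_{f,0}$ is the spline solution of the Dirichlet problem (\ref{exteriorPH2}) on $\Omega$. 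Finally, for $\3bar u_\PML - s_\PML\3bar_{H}$ I would reuse the quasi-optimality argument from the proof of Theorem~\ref{mainresult8}: the inf--sup lower bound $C_*$ of Theorem~\ref{mainresult2}, the continuity of $B_{k^2}$ from Lemma~\ref{biform}, i.e. (\ref{continuity}), and the Galerkin orthogonality (\ref{orthogonality}) reduce this to the best spline approximation of $u_\PML$, which Theorem~\ref{SplineOrder} bounds by $C(1+k|\triangle|)|\triangle|^{s-1}|u_\PML|_{s,2,\Omega}$. Summing the three gives (\ref{convergencerate5}), the leading constants inheriting the $k$-robustness of $C_*$ supplied by the second part of Theorem~\ref{mainresult2}.

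The step I expect to require the most care is the bookkeeping at the interfaces of the decomposition rather than any single estimate. Two points need justification. First, the extensions of $u_{f,0}$ and $s_{f,0}$ by zero outside $\Omega$ must be shown to be admissible, so that the zero-extended $s_{f,0}$ still lies in the discrete space and the extended $u_{f,0}$ retains enough regularity for the homogeneous difference $u$ to be a legitimate datum for Theorem~\ref{BPThm}; this is where the hypotheses that $k^2$ be neither a Dirichlet eigenvalue of (\ref{eigen}) nor of the Laplacian over $\Omega$ enter, guaranteeing via Theorem~\ref{Fredholm} that $u_{f,0}$ and $s_{f,0}$ both exist and are unique. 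Second, the three contributions are measured in slightly different native norms (the $H^1(\Omega)$ norm in Theorem~\ref{BPThm}, broken Sobolev semi-norms in the spline estimates), so one must invoke the stated equivalence of $\3bar\cdot\3bar_{H}$ with the standard $H^1$ norm, uniformly in the bounded PML weights $A$ and $J$, to collect everything into a single $\3bar\cdot\3bar_{H}$ bound. Once these compatibility issues are settled, the remainder is the triangle inequality together with the three cited estimates.
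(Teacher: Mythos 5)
Your proposal follows essentially the same route as the paper: the paper proves Theorem~\ref{mainresult9} by exactly the decomposition $u_{f,g}-s_\triangle=(u-u_\PML)+(u_{f,0}-s_{f,0})+(u_\PML-s_\PML)$ displayed just before the statement, bounding the first term by Theorem~\ref{BPThm}, the second by Theorem 8 of \cite{ALW06}, and the third by the quasi-optimality argument from the proof of Theorem~\ref{mainresult8}. Your additional remarks on the zero extensions and on the equivalence of $\3bar\cdot\3bar_{H}$ with the standard $H^1$ norm address details the paper leaves implicit, but they do not change the argument.
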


\section{Numerical Implementation}
The implementation of bivariate splines is different from the traditional finite element method and
the spline method in \cite{S15}. It builds a 
system of equations with smoothness constraints (across interior edges) and boundary condition constraints 
and then solves a constrained minimization instead of coding all smoothness  and boundary condition constraints
into basis functions and then solving a linear system of smaller size.  The constrained minimization is solved
using an iterative method with a very few iterations.   See an explanation of the implementation  
in \cite{ALW06}.  Several advantages of the implementation in \cite{ALW06} are: 
\begin{itemize}
\item{(1)} piecewise polynomials of any degree $p\ge 1$  and higher order smoothness can be used easily. Certainly, they
are subject to the memory of a computer. We usually use $p=5$ and often use $p=5, \cdots, 15$ together with the uniform 
refinement or adaptive refinement to obtain the best possible solution within the budget of computer memory.     
\item{(2)} there is no quadrature formula. The computation of the related inner product and triple product is done using formula; 
Note that the right-hand side function $f$ is approximated by using a continuous spline if $f$ is continuous or a discontinuous
spline function $s_f\in \mathbb{S}^{-1}_p(\triangle)$. 
\item{(3)} the computation of the mass matrix and weighted stiffness matrix is done in parallel; 
\item{(4)} the matrices for the smoothness conditions (across interior edges) and boundary 
conditions are done in parallel and are used as side constraints during the minimization of the discrete 
partial differential equations. 
\item{(5)} Using spline functions of higher degree than 1, 
the bivariate spline method can find accurate solution without using 
a  pre-conditioner as shown in Example 4 in the next section.
\end{itemize}     

In addition to triangulations, the implementation in \cite{ALW06} has been extended to the setting of  
polygonal splines of arbitrary degrees over a partition of polygons of any sides (cf. \cite{FL16} and 
\cite{LL17}).

Our computational algorithm is given as follows. For spline space $\mathbb{S}^1_p(\triangle)$, let
$\bfc$ be the coefficient vector associated with a spline function $s\in \mathbb{S}^{-1}_p(\triangle)$ 
which satisfies the smoothness conditions $H\bfc=0$ so that $s\in \mathbb{S}^1_p(\triangle)$. More precisely,  
as in the implementation explained in \cite{ALW06}, $\bfc$ is a stack of the polynomial coefficients over each
triangle in $\triangle$. That is, $\bfc$ is the vector for a spline function $s\in \mathbb{S}^{-1}_p(\triangle)$
a discontinuous piecewise polynomial function over $\triangle$.   
Let $H$ be the smoothness matrix such that $s\in \mathbb{S}^1_p(\triangle)$ if and
only if $H\bfc=0$.  

In our computation, we approximate $A_{ii}, i=1, 2$, $J$, and $f$ by continuous spline functions in 
$\mathbb{S}^0_p(\triangle)$ as $A_{ii}, J$ are continuously differentiable. For example, we use a good 
interpolation method to 
find a polynomial interpolation of $J$ over each triangle $T$ and these interpolatory polynomials over all triangles in 
$\triangle$ form a continuous spline in $\mathbb{S}^0_p(\triangle)$. We denote the interpolatory spline of $J$ by $\tilde{J}$.
Similar for each entry in $A$. It is easy to see that $\tilde{A}$ and $\tilde{J}$ approximate $A$ and $J$ 
very well. See \cite{LS07}.
In our computation, we really use the spline weak solution $s_\triangle$ which satisfies the following 
weak formulation:
\begin{eqnarray}
\label{splineweakform}
&&\int_{\Omega}\left[\tilde{A}\nabla s_{\!_\Delta}\cdot\nabla v - k^2\tilde{J} s_{\!_\Delta} v\right]d\bfx = \int_{\Omega} 
s_{\!_F}v d\bfx
\end{eqnarray}
for all testing spline functions $v\in {\cal S}^{-1}_d(\triangle)$ which vanish on $\Gamma_{\!_{PML}}$, and satisfy the 
boundary condition $v = s_g$ on the boundary of the scatterer $\Gamma$, with $\tilde{A}$, $\tilde{J}$ the spline approximations 
of $A$ and $J$ respectively. Note that this approximate weak form (\ref{splineweakform}) enables us to find the exact 
integrations in the (\ref{splineweakform}) without using any quadrature formulas.   

Next let $D$ be the Dirichlet boundary condition matrix such that the linear system $D\bfc  =\bfg$ is a discretization of 
the Dirichlet boundary condition, e.g. using interpolation at $p+1$ equally-spaced points over the boundary edge(s) of 
each boundary triangle, where $\bfg$ is a vector consisting of boundary values $g$ on $\Gamma$. Let $M$ and $K$ be the mass 
and stiffness matrices associated with the integrations in (\ref{splineweakform}). 
Then the spline solution to (\ref{splineweakform}) can be given in terms of these matrices as follows: 
\begin{equation}
\label{splineweakform2}
\bfc_\triangle^\top K \overline{\bfc} - k^2\bfc_\triangle^\top M\overline{\bfc} 
=\bff^\top M \overline{\bfc}  +\bfg^\top M_\Gamma \bfc, 
\quad \forall \bfc \in \mathbb{C}^N
\end{equation}
for $\bfc, \bfc_\triangle$ which satisfies $H\bfc_\triangle=0$ and $D\bfc_\triangle= \bfg$ 
while $H\bfc_\triangle=0$, 
where $N$ is the dimension of spline space $\mathbb{S}^1_p(\triangle)$. We now further weaken the formulation in 
(\ref{splineweakform2}) to require (\ref{splineweakform2}) holds for all splines in $\mathbb{S}^{-1}_p(\triangle)$.
 
To solve this constrained systems of 
linear equations, we use the so-called  the constrained iterative minimization method in \cite{ALW06}. We only
do a few iterative steps.  It makes our spline solutions very accurate for the Helmholtz problem with  
high wave numbers.  See numerical results in the next section and the numerical results in \cite{LM18} 
for boundary value problems of the Helmholtz equation.

In numerical experiments, we are faced with the issue of choosing the PML strength $\sigma_0$ and PML width $M$. Ideally, $M$ should be as small as possible for a given accuracy level in order to control the computational cost of the PML layer. The result of Theorem~\ref{BPThm} for the continuous PML problem show that the exponential rate of convergence is proportional to $\sigma_0 M$. In Fig~\ref{tr2}, we
investigate how the accuracy of the spline solution for the PML problem depends on the PML strength $\sigma_0$ and the PML width $M$. The domain is an annulus with a square hole of size $0.5\times 0.5$. The interior PML boundary is a square of size $2\times 2$ and the outer boundary is a square of size $(2+M)\times (2+M)$, where $M$ varies from $0.1$ to $1$, and the PML strength $\sigma$ varies from $0$ to $35$. The exact solution is $u(r,\theta) = e^{i\theta}H^{(1)}_1(kr)$ for $k=2\pi$ and for spline degrees $d=3,5,8$. The grid size is fixed at $h=0.1$ in all examples.
\begin{figure}[t!]\small
\begin{tabular}{cc}
\includegraphics[height=2in]{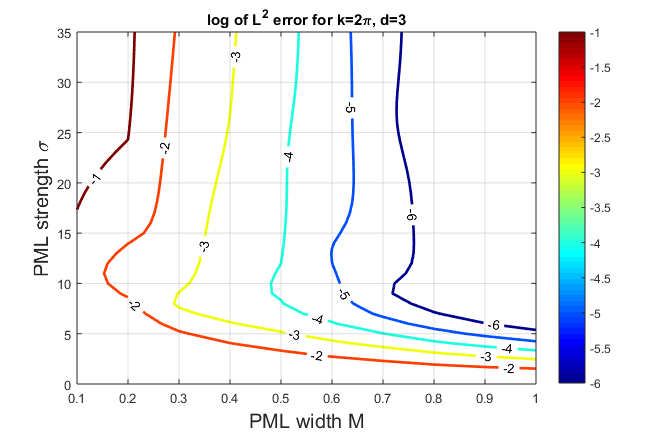} & \includegraphics[height=2in]{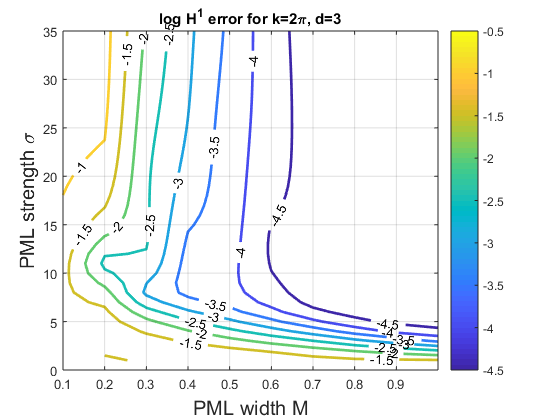} \cr
\includegraphics[height=2in]{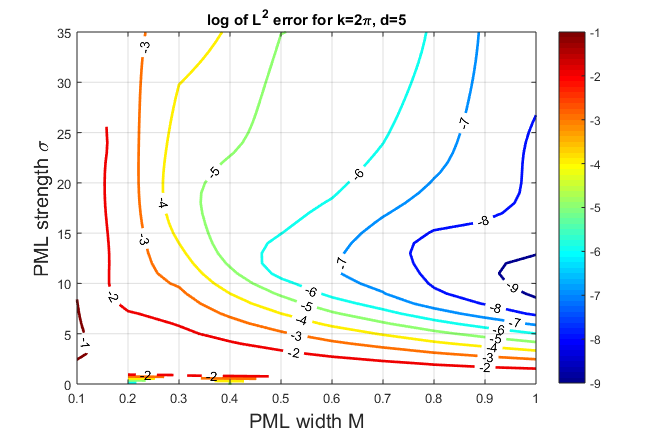} & \includegraphics[height=2in]{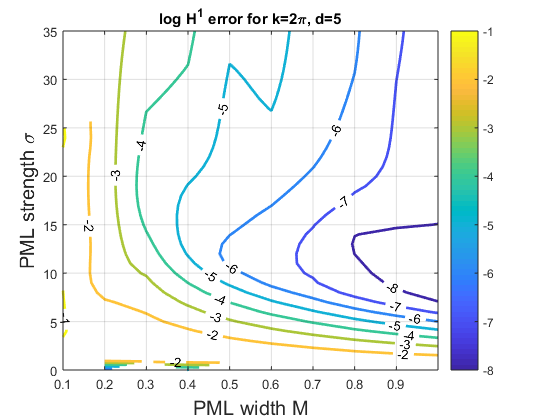} \cr 
\includegraphics[height=2in]{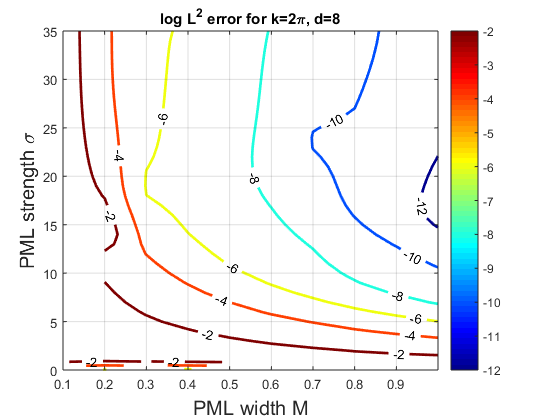} & \includegraphics[height=2in]{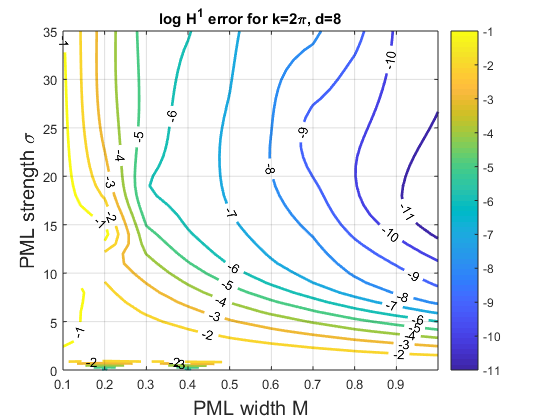} \cr
\end{tabular}
\caption{Contour plots for the $L^2$ and $H^1$ errors on a log scale for different PML strengths $\sigma_0$ and PML widths using splines of degrees $d=3,5,8$.
\label{tr2}}
\end{figure}
The results shown in Fig~\ref{tr2}  illustrate that to attain a specified accuracy level, one can either increase the value of $\sigma_0$ or increase the size of the PML layer. For these examples, increasing the value of the PML strength above a certain threshhold does not improve the accuracy. In fact, in some cases the error gets worse with increasing $\sigma_0$. Results also show that higher order polynomial degrees require smaller PML sizes. For example for an $L^2$ accuracy of $10^{-6}$, the degree 3 splines require a PML width of 0.72 with $\sigma_0=10$, degree 5 splines require a PML width of 0.46 with $\sigma_0=12$, and the optimal PML width is 0.3 with $\sigma_0= 20$ when the degree of the splines is 8. In all examples, one needs a larger PML size in order to attain higher accuracy.

\section{Numerical Results}
In this section, we present numerical results that demonstrate the performance of bivariate spline functions 
in solving 
the exterior Helmholtz problem (\ref{exteriorP}). Our discretization seeks to approximate the solution of the 
PML problem (\ref{eqn:trpml}). 

 For comparison with the PML solution, we also test the performance of the first order absorbing boundary condition, 
that is the solution of 
\begin{eqnarray}
-\Delta u - k^2 u&=&f\;\hbox{ in } \Omega,\nonumber \\
u &=& g\; \hbox{ on }\;\Gamma, \nonumber \\
\dfrac{\partial u}{\partial n}-iku &=& 0\;\hbox{ on }\;\Gamma_{\!_{PML}}.
\end{eqnarray}
\label{eqn:somf}
The Robin boundary condition on $\Gamma_{\!_{PML}}$ is a low order truncating boundary which can be added easily to a PML code. In 
fact, we approximate the Sommerfeld radiation condition by imposing the Robin boundary condition on the exterior boundary of the 
PML problem (\ref{eqn:trpml}) and then set the PML parameter $\sigma_0=0$.

We choose the PML functions for $j=1,2$ as 
\[
  \sigma_j(x_j) = \left\{\begin{array}{@{}l@{\quad}l}
      0 & \mbox{if $|x_j|<a_j$,} \\[\jot]
      \sigma_0\left(\dfrac{|x_j|- a_j}{b_j-a_j}\right)^n & \mbox{if $a_j\leq |x_j|\leq b_j$} 
    \end{array}\right.
    \label{eqn:pmlf}
\]
where $[-a_1,a_1]\times [-a_2,a_2]$ is the inner PML boundary, and $[-b_1,b_1]\times [-b_2,b_2]$ is the outer PML boundary. In all 
numerical experiments , we set $n=4$. Other choices of $n=0,1,2,3,5,\cdots$ or even unbounded functions (see 
\cite{BHPR04, BHPR08}) can be made, but preliminary numerical results suggest that the fourth order PML 
function considered here 
results a better accuracy. 
The errors are measured by the relative $L^2$ and $H^1$ norms on the triangulation $\Delta$ defined by
$$ \mbox{L2error} = \dfrac{\|s_{\!_\triangle}-u\|_{L^2(\Delta)}}{\|u\|_{L^2(\Delta)}},\;\;\;\mbox{H1error}=\dfrac{\|\nabla( 
s_{\!_\triangle}- u)\|_{L^2(\Delta)}}{\|\nabla u\|_{L^2(\Delta)}},$$ 
where $s_{\!_\triangle}$ is the computed spline solution and $u$ the exact solution. In our computation, 
we actually use the discrete $\ell_2$ errors based on $500\times 500$ equally-spaced points 
over $[-b_1,b_1]\times [-b_2, b_2]$ exterior of $D$ to compute these relative errors above. 

\subsection*{Example 1}
\label{exactscat}

Our first example is a detailed investigation of scattering of a plane wave from a disk $D_2$ of radius $a=2$ 
centered at the  origin. We impose the  PML layer inside the rectangular annulus $[-5,5]^2\setminus(-3,3)^2$. 
The computational region of interest 
for the problem is the annular region outside the circle $(-3,3)\setminus\overline{D_2}$. It is well known that 
if the plane wave is propagating along the positive $x$-axis $\bfd = (1,0)$, then the solution of the 
scattering problem can be expressed as a series of Hankel functions in polar coordinates
\begin{eqnarray}
u(r,\theta) &=& -\left[\dfrac{J_0(ka)}{H_0^{(1)}(ka)}H_0^{(1)}(kr) + 2\sum_{m=1}^\infty i^m\dfrac{J_m(ka)}{H_m^{(1)}(ka)}H_m^{(1)}(kr)\cos(m\theta)\right],
\end{eqnarray}
where $J_m(z)$ denotes the Bessel function of order $m$ and $H_m^{(1)}(z)$ is the Hankel function of the first 
kind and order $m$. In this example, a simple calculation verifies that $f=0$.

The incident field is a plane wave $\exp(ik\bfx\cdot\bfd)$, and we take Dirichlet data on the boundary of the 
scatterer as $$g(\bfx) = u(r,\theta)|_{\Gamma}.$$
Our choice of boundary data eliminates errors due to the approximation of the disk by regular polygons, since 
we are only interested in errors due to discretization and due to the PML. 
The minimal tolerance for the iterative method was set at $\varepsilon = 10^{-10}$. 

\begin{figure}[htbp]
  \centering
  \label{fig:circlemesh}
\includegraphics[scale=0.2]{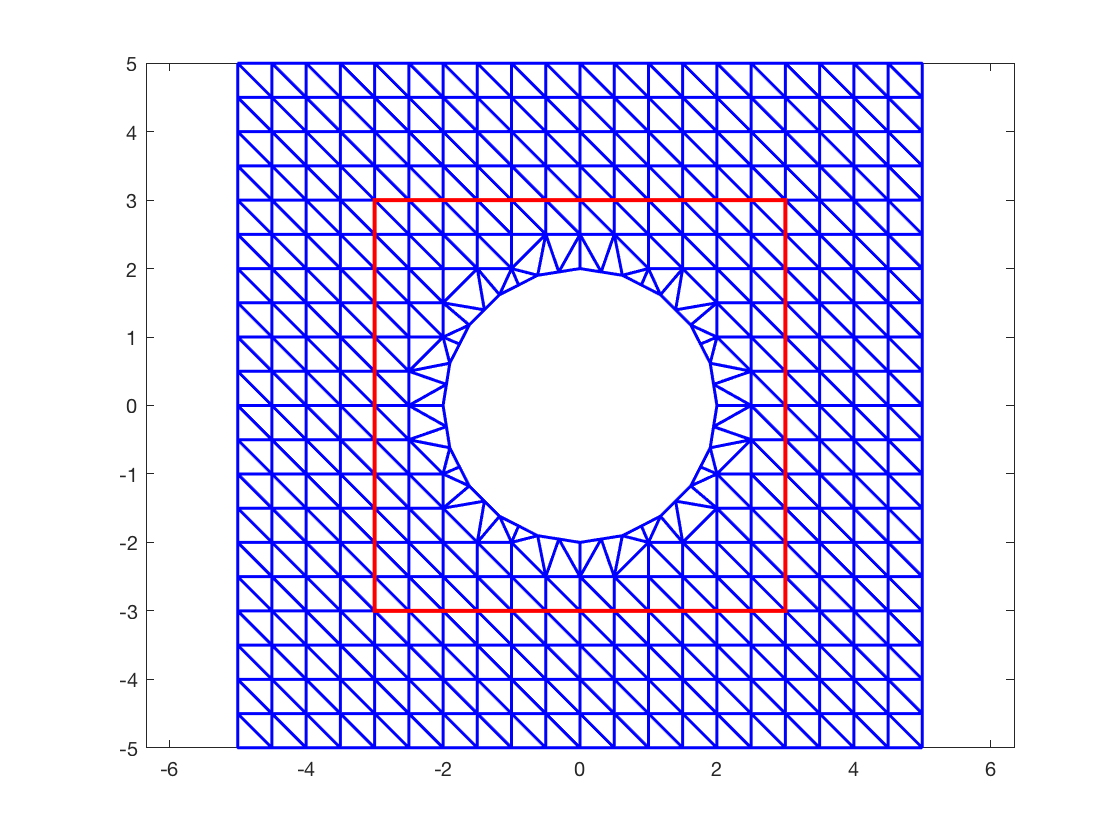}
\includegraphics[scale=0.55]{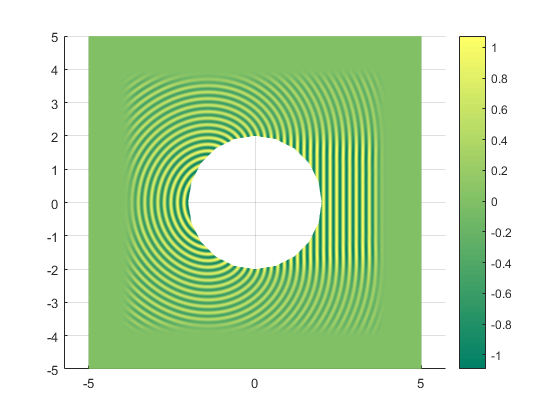}
  \caption{Meshes in Example 1 for $h=0.5$. Left: structured mesh aligned with inner PML boundary. Right: PML solution of the exterior Helmholtz equation with a wavenumber of $k=32$.}
  \label{fig:meshcircle}
\end{figure}
\begin{table}[h]\small
\begin{center}
 \begin{tabular}{|c| c| c| c| c|} 
 \hline
 $h$ & \# dofs & PML $H^1$-error  & PML $L^2$-error\\ [0.5ex] 
 \hline
 1.000 &3780 & 5.206370e-01 &4.820429e-01  \\ 
 
 0.500 & 15120&3.428191e-02& 1.821518e-02\\
 
  0.333 & 33264 &5.605999e-03&1.752080e-03\\  

  0.250 &59220& 1.292328e-03& 2.777040e-04\\  

  0.200&92736&4.314135e-04 &6.456502e-05\\   
  [1ex] 
 \hline 
\end{tabular}
\caption {Example 1.1: scattering from a disk, structured mesh PML solution with $\sigma_0=13$: $H^1$ and $L^2$ 
errors  using fifth order splines $S^1_5(\Delta)$ for $h=1/\ell$, $\ell=1,2,3,4$  for $k=8$. The computational 
domain is the annular region outside a disk of radius 2, $[-3,3]^2\backslash \overline{D_2}$ and the PML layer 
is $[-5,5]^2\backslash (-3,3)^2.$ } \label{tab:res1} 
\end{center} 
\end{table}

\begin{table}[h]\small
\begin{center}
 \begin{tabular}{|c|c|c|c||c|c|} 
 \hline
 $d$ & \# dofs & PML $H^1$-error &ABC1 $H^1$ error & PML $L^2$-error&ABC1 $L^2$ error\\ [0.5ex] 
 \hline
 5 & 15120 &3.428191e-02&1.110241e-01 &1.821518e-02&1.133321e-01\\ 
 
 6 & 20160& 4.833353e-03&9.584999e-02 &1.555229e-03&1.006903e-01\\
 
7 & 25920& 9.907632e-04&8.816546e-02 & 2.898009e-04&9.254443e-02\\
 
 8 &32400  &1.196707e-04&8.274271e-02 &2.746106e-05&8.680763e-02 \\
 
  9 & 39600 &2.111239e-05 &7.864768e-02 &4.794616e-06&8.247796e-02\\  

 10 &47520 &2.013801e-06& 7.541922e-02&5.266287e-07&7.906767e-02 \\  
  [1ex] 
 \hline 
\end{tabular}
\caption{Example 1.2: scattering from a disk. Comparison of the PML and first order ABC on a structured mesh. $H^1$ and $L^2$ errors of spline solutions of various degrees $S^1_d(\Delta)$ for $d=5,6,7,8,9,10$, for wavenumber $k=8$ and meshwidth $h=0.5.$  The computational region of interest is the annular region outside a disk of radius 2, $[-3,3]^2\backslash \overline{D_2}$ and the PML layer is $[-5,5]^2\backslash (-3,3)^2$} \label{tab:res2} 
\end{center} 
\end{table}

\noindent
EXAMPLE 1.1: \emph{The $h$-version}\\
We first investigate convergence by refining a structured mesh uniformly so that $h = 1, 1/2, 1/3, 1/4, 1/5$ while keeping the wavenumber fixed at $k=8$. Solutions are sought in the space $S^1_5(\Delta)$. Results are shown in Table \ref{tab:res1}. Both the $L^2$ and $H^1$ errors reduce by an order of magnitude with each refinement level. However the matrix size of the problem increases drastically with each uniform refinement, requiring $92736$ degrees of freedom for $h=0.2$ to reach an accuracy level of $4.3\times 10^{-2}\%$ error in the $H^1$ norm. \\

\noindent
EXAMPLE 1.2: \emph{The $p$-version}\\
Our next experiment  investigates convergence when the polynomial degree $d$ is raised when $k=8$  and $h=0.5$ are fixed. 
Solutions are sought in the spaces $S^1_d(\Delta)$ for $d=5,6,7,8,9,10$. Results are presented in Table \ref{tab:res2}. 
In Table \ref{tab:res2}, we compare the PML solution with that of the first order absorbing boundary condition 
(see problem \ref{eqn:somf}). As expected, the PML drastically outperforms the first order ABC. The error from the first order ABC 
stagnates at $7\%$ even using degree $10$ splines. This suggests that the dominant source of error is the ABC rather than the 
discretization, and the importance of accurate truncation of the exterior Helmholtz problem. Table \ref{tab:res2} also shows that 
raising the degree of the splines rather than refining the mesh results in a significant reduction in the degrees of freedoms 
necessary to get a specified accuracy level. For example when $d=10$, the $H^1$ error is about $2\times 10^{-4}\%$ 
requiring $47520$ degrees of freedom. 
\begin{table}[h]\small
\begin{center}
 \begin{tabular}{|c| c |c |c|} 
 \hline
 $d$ & \# dofs & PML $H^1$-error& PML $L^2$-error\\ [0.5ex] 
 \hline
 5&829,416 &1.007229e+00&1.004805e+00 \\ 
 
 6 &1,105,888&6.792040e-01&6.770829e-01 \\
 
 7 &1,421,856 &8.881512e-02&8.472434e-02\\
 
 8 &1,777,320&9.921788e-03&6.929828e-03\\
 
  9 & 2,172,881&2.048858e-03&8.027105e-04\\  

  10 &2,606,736&4.099069e-04&1.26832e-04\\  
  [1ex] 
 \hline 
\end{tabular}
\caption {Example 1.3: scattering from a disk, for a large wavenumber $k=100$,  meshwidth $h=1/15$ on the domain in Figure \ref{fig:meshcircle}. PML solution with $\sigma_0=13$: $H^1$ and $L^2$ errors  using  $S^1_d(\Delta)$ for $d=5,6,7,8,9,10.$} \label{tab:k100} 
\end{center} 
\end{table}

\noindent
EXAMPLE 1.3: \emph{High Frequency Scattering}\\
In the next experiment, the wavenumber is set at $k=100$, and the mesh is fixed at $h=1/15$ on the mesh in Figure 
\ref{fig:meshcircle}. This is a high wavenumber, considering the characteristic length of the domain $kL\sim 800$. 
In Table \ref{tab:k100}, we investigate convergence of spline solutions to the Helmholtz scattering problem with $k=100$ in the 
spaces $S^1_d(\Delta)$ for polynomial degrees $d=5,6,7,8,9,10$. Results show good convergence at high wavenumber if $d$ is 
chosen large enough.
\begin{figure}[htbp]\small
  \centering
  \label{fig:largemesh}
\includegraphics[scale=0.4]{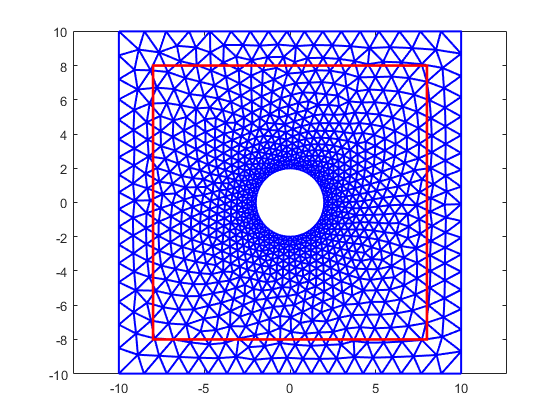}
\includegraphics[scale=0.15]{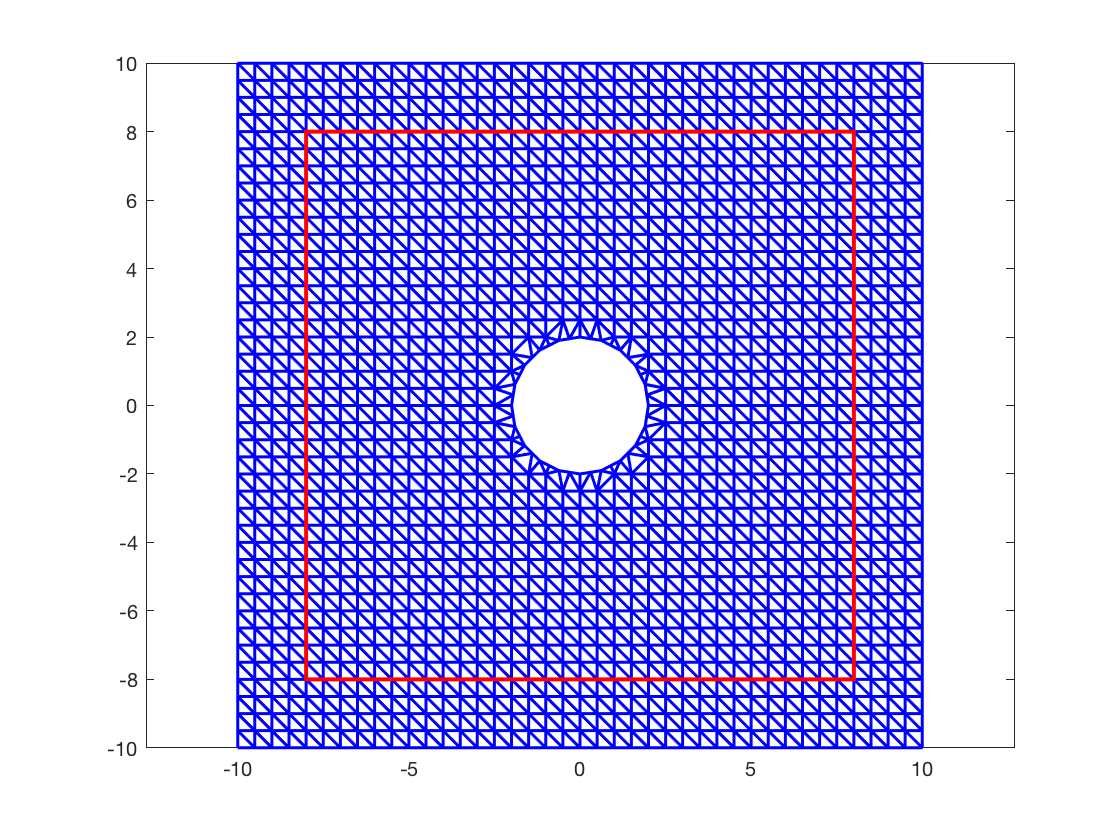}
\includegraphics[scale=0.15]{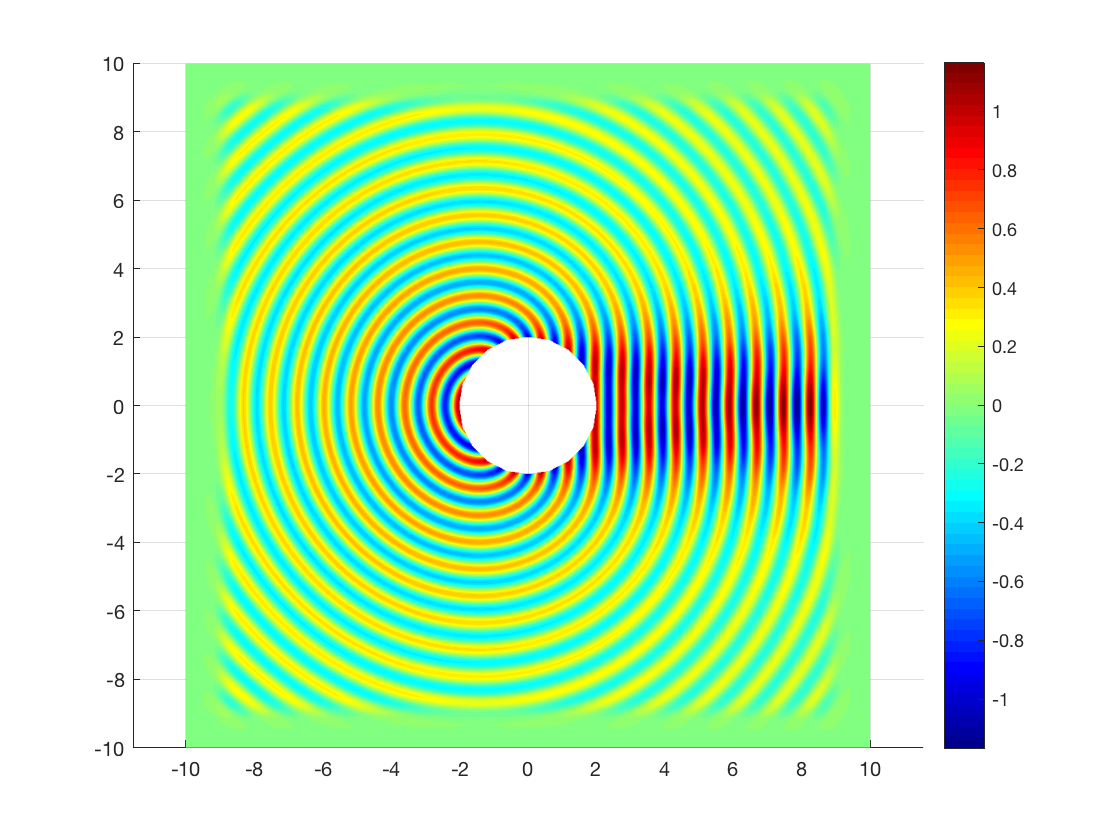}
\includegraphics[scale=0.4]{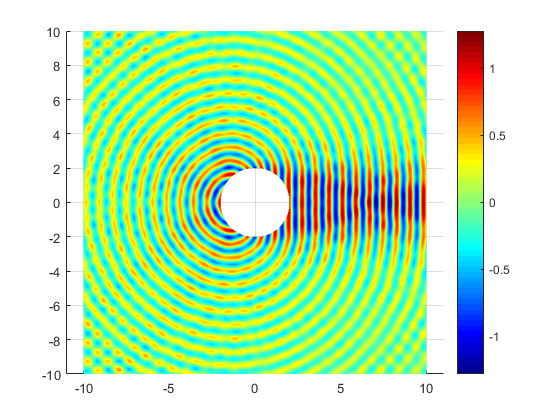}
  \caption{Top left: non-uniform mesh. Top right: uniform mesh. Bottom left: PML solution in $S^1_8(\Delta)$ for wavenumber $k=8$. 
Bottom right: First order ABC solution with a Robin boundary condition on the exterior boundary with wavenumber $k=8$. The effect 
of spurious reflections can be observed in the ABC solution. }
  \label{fig:meshlarge}
\end{figure}
\begin{table}[h]\small
\begin{center}
 \begin{tabular}{|c| c |c |c|} 
 \hline
 $d$ & \# dofs & PML $H^1$-error& PML $L^2$-error\\ [0.5ex] 
 \hline
 5&65520 &5.631785e-02  & 4.420372e-02 \\ 
 
 6 &87360 &6.686595e-03 &2.336916e-03  \\
 
 7 &112320 &1.214879e-03  &3.377661e-04\\
 
 8 &140400 &1.849320e-04 &4.058609e-05 \\
 
  9 & 171600 &2.635489e-05&5.472983e-06 \\  

  10 & 205920 &3.483980e-06&7.143510e-07\\  
  [1ex] 
 \hline 
\end{tabular}
\caption {convergence for scattering from a disk, uniform mesh with $h=0.5$. PML solution with $\sigma_0=13$: $H^1$ and $L^2$ 
errors  using splines $S^1_d(\Delta)$ for $d=5,6,7,8,9,10$, wavenumber $k=8$. The computational domain is the annular region 
$(-8,8)^2\backslash [-2,2]^2$ and the PML layer is $[-10,10]^2\backslash (-8,8)^2.$ }
 \label{tab:kplarge} 
\end{center} 
\end{table}
\begin{table}[h]\small
\begin{center}
 \begin{tabular}{|c| c |c |c|} 
 \hline
 $d$ & \# dofs & PML $H^1$-error& PML $L^2$-error\\ [0.5ex] 
 \hline
 5&42756 &2.075821e-01&1.953157e-01 \\ 
 
 6 &57008 &7.687323e-02 &7.205546e-02 \\
 
 7 &73296 &1.682735e-02&1.504267e-02\\
 
 8 &91620&3.660845e-03&2.922866e-03 \\
 
  9 & 111980 &8.114318e-04&5.587460e-04 \\  

  10 &134376 &2.211674e-04&1.501277e-04\\  
  [1ex] 
 \hline 
\end{tabular}
\caption {Example 1.4:$p$-version for a non-uniform mesh with $h=0.5$. PML solution with $\sigma_0=13$: $H^1$ and $L^2$ errors  
using  $S^1_d(\Delta)$ for $d=5,6,7,8,9,10$, wavenumber $k=8$. The computational region of interest is the annular region 
$(-8,8)^2\backslash\overline{D_2}$ and the PML layer is $[-10,10]^2\backslash (-8,8)^2.$ } \label{tab:kplarge1} 
\end{center} 
\end{table}

\noindent
EXAMPLE 1.4: \emph{A large domain}\\
In Tables \ref{tab:kplarge} and \ref{tab:kplarge1}, the domain of the scattering problem is increased to the annulus 
$[-10,10]\setminus\overline{D_2}$, and the PML layer is  set at $[-10,10]\setminus(-8,8)$, while the wavenumber is fixed at $k=8$. 
We test two cases of a structured mesh aligned with the PML boundaries and an unstructured mesh that cuts through the inner PML 
boundary. The unstructured mesh is graded so that the triangles in the PML layer are larger than triangles close to the scatterer, 
as shown in Figure \ref{fig:meshlarge}. We only  investigate convergence with respect to polynomial degree $d=5,6,7,8,9,10$ with a 
PML layer. Numerical results in Table \ref{tab:kplarge1} suggest good convergence even if the mesh is not aligned with the PML 
layer. There is a slight advantage to using unstructured meshes with larger triangles in the PML layer, with respect to the 
degrees of freedom required. An adaptive PML $hp$ procedure using error indicators is need to guarantee optimality with respect to 
mesh size and polynomial degrees.

\subsection*{Example 2}
This example is taken from Table 1 of \cite{KP10a}, 
where the authors use piecewise linear functions to solve a scattering 
problem with a PML layer. We are interested in using bivariate spline functions to solve this problem. This is an example of 
scattering of a spherical wave from a square $D = [-1,1]^2\subset\mathbb{R}^2$. The wavenumber is $k=2$, and the boundary 
condition on the surface of the scatterer is given by $g = e^{i\theta}H_1^{(1)}(kr)$. This implies that the exact solution is 
$u(r,\theta)=e^{i\theta} H_1^{(1)}(kr)$ in the exterior of $D$. Note that in this example $f=0$.

We choose the PML parameter $\sigma_0=13$. The region of computational interest is the annulus $(-3,3)^2\setminus [-1,1]^2$, and 
the PML layer is placed on $[-5,5]^2\setminus (-3,3)^2$. As in Table 1 of (\cite{KP10a}), we investigate convergence of the error 
in the $L^2$ and $H^1$ norms for different mesh sizes (see Figure \ref{fig:meshsquare}). \\

\noindent 
EXAMPLE 2.1: \emph{Continuous Piecewise Linears}\\
We first use continuous piecewise linears $S^0_1(\Delta)$ to replicate the results in Table 1 of (\cite{KP10a}) Results are 
shown in Table (\ref{tab:kp101}). 
We observe first order convergence in the $H^1$ norm and second order convergence in $L^2$ in agreement with the results 
in (\cite{KP10a}).
\begin{figure}[htbp]\small
  \centering
  \label{fig:squaremesh}\includegraphics[scale=0.4]{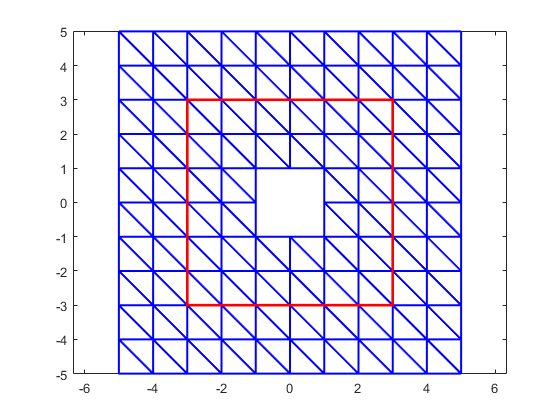}
\includegraphics[scale=0.4]{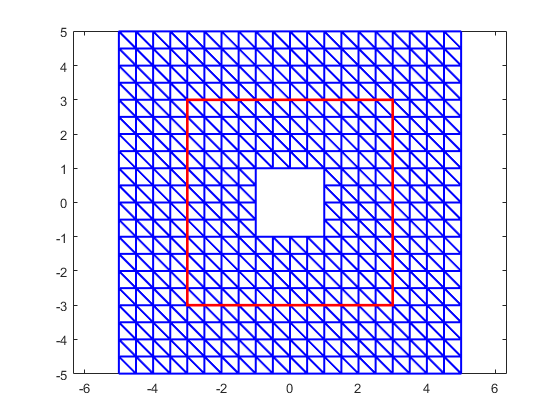}
  \caption{Meshes in Example 2 for $h=1, 0.5$. The PML layer is outside the red boundary.}
  \label{fig:meshsquare}
\end{figure}
\begin{table}[h]\small
\begin{center}
 \begin{tabular}{|c| c| c| c |c |c|} 
 \hline
 $h$ & \# dofs & PML $H^1$-error & ratio & PML $L^2$-error& ratio \\ [0.5ex] 
 \hline
 1 & 576 & 0.882 & 0.00 & 0.601 & 0.000 \\ 
 
 1/2 & 2304 & 0.421 & 2.10 & 0.234 & 2.572 \\
 
 1/4 & 9216 & 0.192 & 2.19 & 0.0676& 3.458\\
 
 1/8 & 36864 & 0.0938 & 2.05 & 0.0174 & 3.891\\
 
  1/16 & 147456 & 0.0463 & 2.03 &0.00437 & 3.97\\  
  [1ex] 
 \hline 
\end{tabular}
\caption {Example 2.1: The $h$-version. PML solution with $\sigma_0=13$: $H^1$ and $L^2$ errors  using linear splines 
$S^0_1(\Delta)$ for $h=1/2^\ell$, $\ell=0,1,2,3,4$ with exact solution $u(r,\theta) = e^{i\theta}H^{(1)}_1(kr)$ for $k=2$. The 
computational domain is the annular region $(-3,3)^2\backslash [-1,1]^2$ and the PML layer is $[-5,5]^2\backslash (-3,3)^2.$  
\label{tab:kp101} }
\end{center} 
\end{table}\\

\noindent
EXAMPLE 2.2: \emph{The $p$ version}\\
Next, we increase the degree of the spline spaces for a fixed mesh (we choose a course mesh $h=1$). The spline spaces 
$S^1_d(\Delta)\subset C^1(\Omega)$ are chosen with degree $d=5,6,7,8,9,10$. Results are shown in Table \ref{tab:kpd5}.
Compared with the results in Table \ref{tab:kp101}, we are able to obtain good accuracy with much fewer degrees of freedom on a 
relatively course mesh. 
\begin{table}[h]\small
\begin{center}
 \begin{tabular}{|c|c|c|c||c|c|} 
 \hline
 $d$ & \# dofs & PML $H^1$-error & ratio & PML $L^2$-error& ratio\\ [0.5ex] 
 \hline
 5 & 4032 & 3.739715e-03& - &1.368041e-03 &-\\ 
 
 6 & 5376 & 1.901169e-03&1.97&1.040349e-03&1.31\\
 
7 & 6912 & 4.698431e-04&4.05&2.910898e-04&3.57\\
 
 8 & 8640 & 1.488127e-04&3.16&1.162590e-04&2.50\\
 
  9 & 10560 & 4.826350e-05&3.08&1.599918e-05&7.27\\  

 10 & 12672 & 4.742504e-05&1.02&4.377348e-05&0.37\\  
  [1ex] 
 \hline 
\end{tabular}
\caption{Example 2.2: The $p$-version. PML solution with $\sigma_0=13$: $H^1$ and $L^2$ errors of spline solutions of various 
degrees $S^1_d(\Delta)$ for $d=5,6,7,8,9,10$, with exact solution $u(r,\theta) = e^{i\theta}H^{(1)}_1(kr)$ for wavenumber $k=2$ 
and mesh width $h=1.$  The computational domain is the annular region $(-3,3)^2\backslash [-1,1]^2$ and the PML layer is 
$[-5,5]^2\backslash (-3,3)^2$.  \label{tab:kpd5}}  
\end{center} 
\end{table}

\subsection*{Example 3}
EXAMPLE 3.1: \emph{Helmholtz equation in a variable medium}\\
In this section we illustrate the performance of bivariate splines for the variable media Helmholtz equation with 
a contrast function $b(\bfx)$ whose support is contained in $\Omega_F$.
\begin{eqnarray}
&&-\Delta u(\bfx)-k^2(1-b(\bfx))u(\bfx)= k^2b(\bfx)f(\bfx)\;\;\mbox{in}\;\;\Omega\\
&& u(\bfx) = g(\bfx)\;\;\mbox{on}\;\;\Gamma
\end{eqnarray}
satisfying the Sommerfeld radiation condition at infinity. This is similar to Example 4.1 of \cite{LG16} where the source function 
is a plane wave. We use a point source incident field. The domain $\Omega=[-3,3]^2\backslash \overline{B}_{0.25}$ and 
$\Omega_\PML=[-3,3]^2\backslash (-2,2)^2$ where $B_{0.25}$ is a disk of radius 0.25 centered at the origin. We choose the contrast 
function as $b(\bfx)=0.5\;\mbox{erfc}(5(|\bfx|^2-1))$ where the error function $\mbox{erfc}(r)$ is defined by 
$$\mbox{erfc}(r)=\dfrac{2}{\sqrt{\pi}}\int_r^\infty e^{-t^2}\;dt.$$
The function $b(\bfx)$ decays to zero quickly outside a bounded set (see Fig \ref{fig:bmp}). We take the data function as 
$f(\bfx)=H_0^{(1)}(k|\bfx|)$, and choose Dirichlet data on $\Gamma$ as $g(\bfx)=u(\bfx)|_{\Gamma}$. The exact solution is then 
$u(\bfx)=H_0^{(1)}(k|\bfx|)$.

\begin{table}[h]\small
\begin{center}
 \begin{tabular}{|c| c |c |c|} 
 \hline
 $d$ & \# dofs & PML $H^1$-error& PML $L^2$-error\\ [0.5ex] 
 \hline
 5&49,770 &2.521487e-03&1.703128e-03\\ 
 
 6 &66,360 &5.461371e-04&4.989304e-04 \\
 
 7 &85,320&1.630330e-04&1.677855e-04\\
 
 8 &106,650&6.051181e-05& 6.425829e-05 \\
 
  9 & 130,350 &1.688453e-05& 1.756126e-05 \\  

  10 &156,420 &7.292140e-06&7.701371e-06\\  
  [1ex] 
 \hline 
\end{tabular}
\caption {Example 3: Helmholtz equation in a variable medium. PML solution with $\sigma_0=10$: $H^1$ and $L^2$ errors of spline 
solutions of various degrees $S^1_d(\triangle)$ for $d=5,6,7,8,9,10$. The exact solution is $u(\bfx)=H_0^{(1)}(k|\bfx|)$ with 
$k=15$. The domain is shown in Figure \ref{fig:bmp}.  \label{tab:bmp} } 
\end{center} 
\end{table}

\begin{figure}[t!]\small
    \centering
    \begin{subfigure}[t]{0.5\textwidth}
        \centering
        \includegraphics[height=2in]{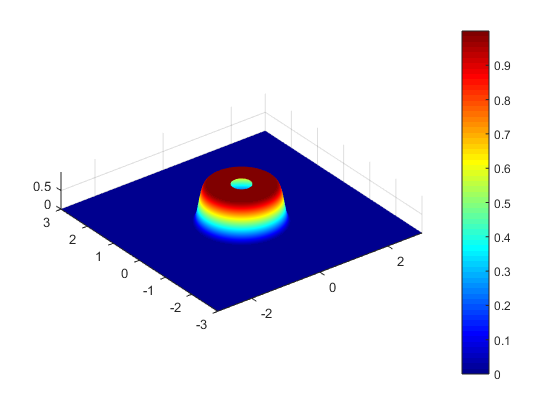}
        \caption{Bump function $b(\bfx)=0.5\mbox{erfc}(5(|\bfx|^2-1))$.}
    \end{subfigure}%
    ~
 \begin{subfigure}[t]{0.5\textwidth}
        \centering
        \includegraphics[height=2in]{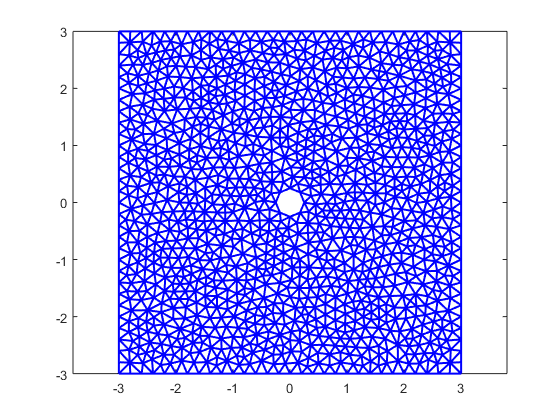}
        \caption{Mesh}
    \end{subfigure}

    \caption{The contrast function and mesh in Example 3.}
\end{figure}\label{fig:bmp}

\begin{figure}[t!]
\begin{tabular}{cc}
\includegraphics[height=2in]{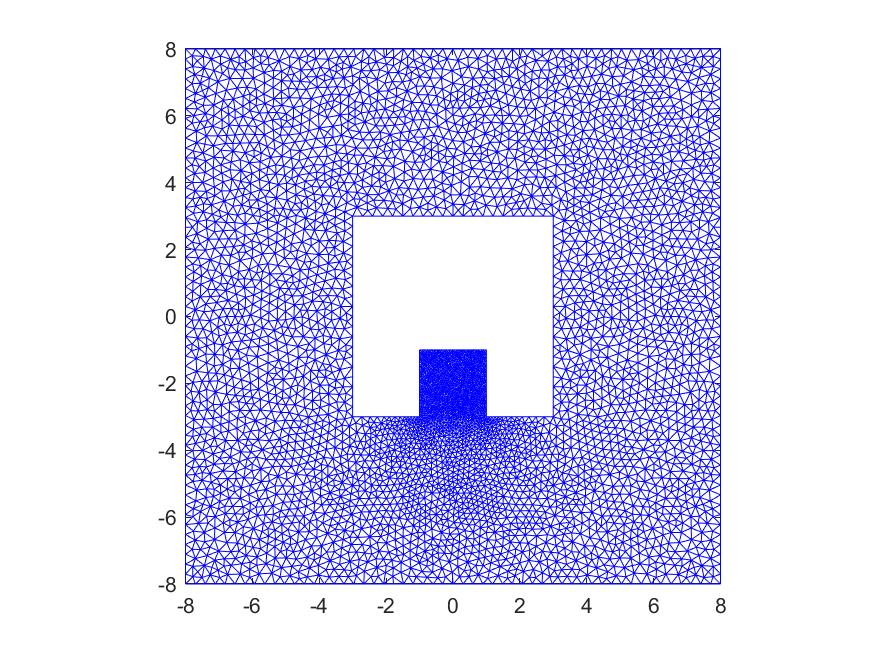} & \includegraphics[height=2in]{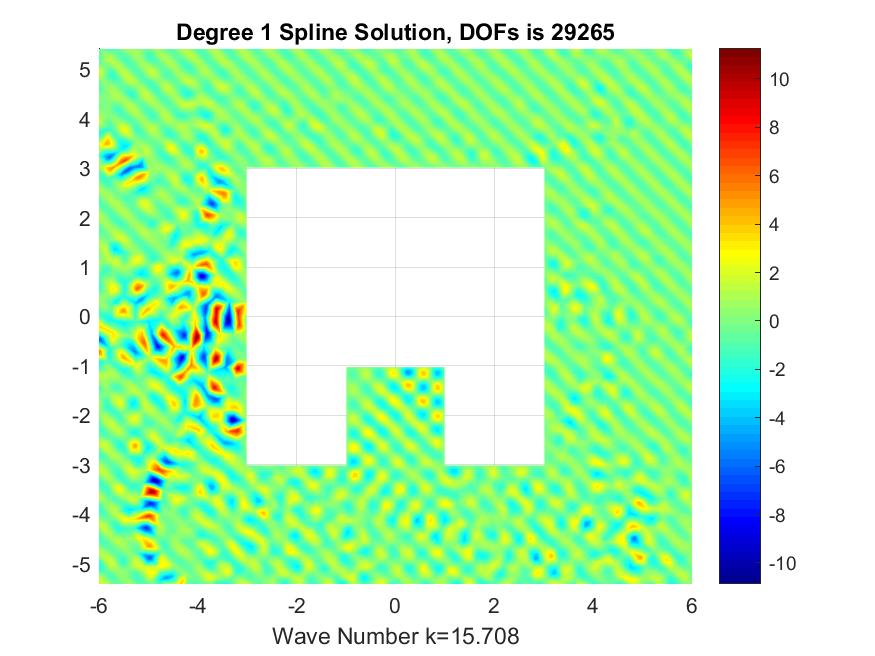} \cr
\includegraphics[height=2in]{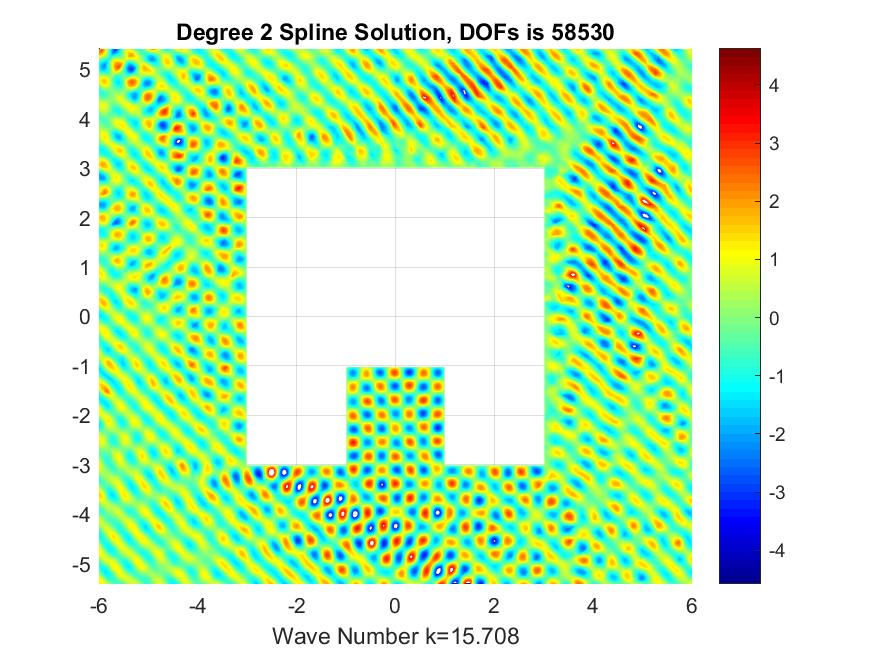} & \includegraphics[height=2in]{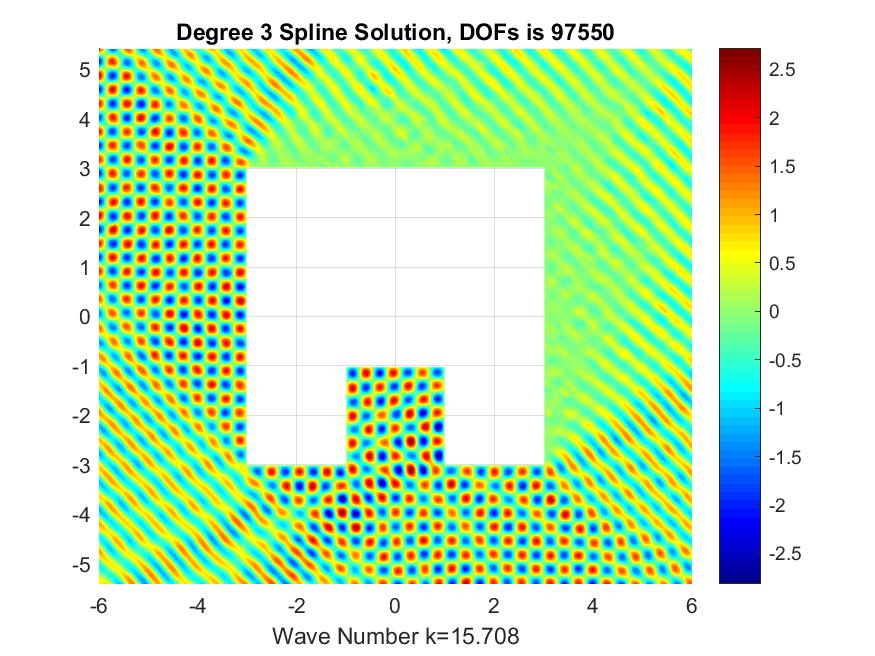} \cr 
\includegraphics[height=2in]{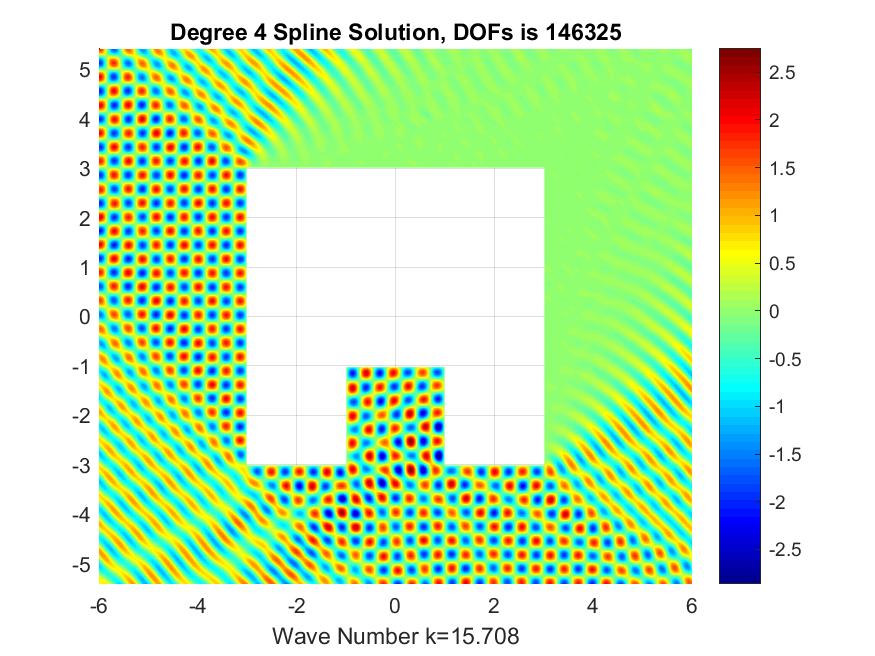} & \includegraphics[height=2in]{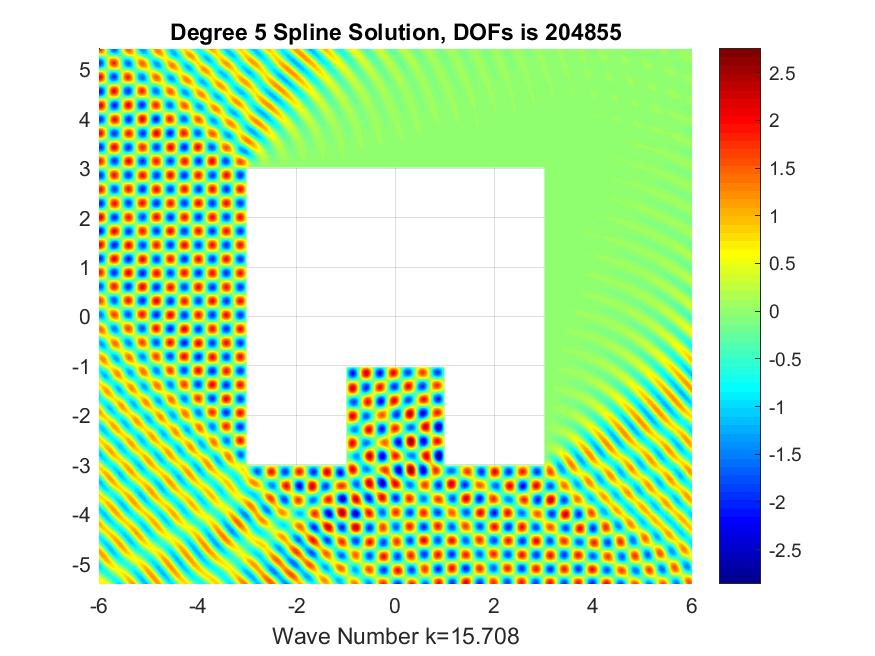} \cr
\end{tabular}
\caption{The truncated sound soft scattering problem using spline functions of degree $d=1, 2, \cdots, 5$. When $d\ge 3$, 
the resulting spline solution well resolves total  wave (incident plus scattered).
\label{tr1}}
\end{figure}

\begin{figure}[htpb]
\begin{tabular}{cc}
\includegraphics[height=2in]{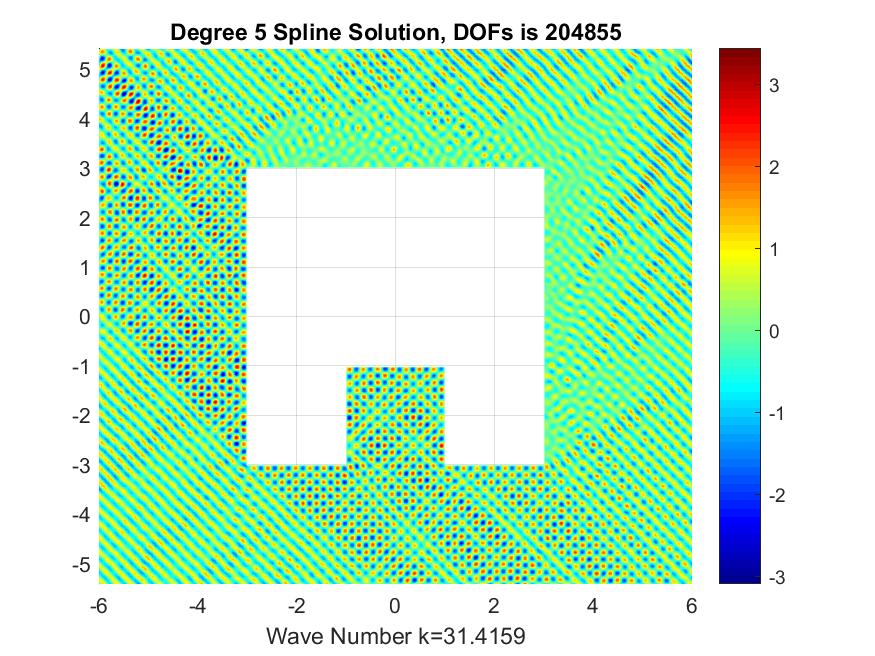} & \includegraphics[height=2in]{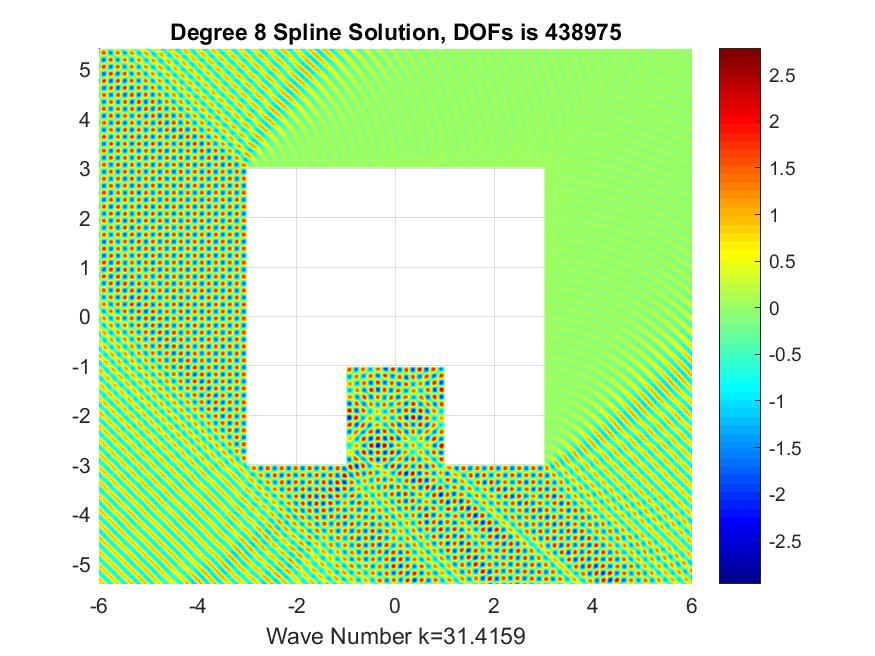} \cr
\end{tabular}
\caption{The truncated sound soft scattering problem using spline functions of degree $d=5$ and $d=8$. When $k=10\pi$, 
the spline solution of degree 5 does not produce an accurate approximation, but the spline solution of degree 8 does.  
\label{tr2}}
\end{figure}
\subsection*{Example 4} 
This example is taken from \cite{GGS15} where the authors investigate the use of a pre-conditioner to solve the truncated sound 
soft scattering problem based on the shifted-Laplace equation. See \cite{GSV17} for another preconditioning approach.  
The domain and mesh are depicted in Figure~\ref{tr1} (top left panel). The outer boundary is a square of
size $12\times 12$. The obstacle is a square of size $6\times 6$ placed symmetrically inside, 
with a $2\times 2$ square removed from the bottom side. 
We solve the truncated sound soft scattering
problem  with an incident plane wave coming from the bottom left corner of the
graph, with the direction of propagation at 45 degrees with the positive $x$-axis.
The scatterer in this example is trapping, since there exist closed paths of rays in its exterior.  
The researchers in \cite{GGS15} experimented the GMRES and used a preconditioner to find accurate solutions for
various wave numbers $k$. 
From spline solutions of various degrees shown in Figure~\ref{tr1}, 
we can see that when $d=1$ and $d=2$, the solutions are not accurate enough in the sense the waves are considerably less 
well resolved. 
When the spline degree is raised to $d\ge 3$, the solutions get better (see Figure~\ref{tr1}), that is,  
the resulting total field (incident plus scattered) is well resolved. Even  using bivariate spline functions 
of degree 5, the solution is generated within 200 seconds a laptop computer. Furthermore, for wave number $k=10\pi$, the 
degree 5 spline solution is not well-resolved as shown in Figure~\ref{tr2}. However, when we use spline functions 
of degree 8, we are able to find a very accurate approximation. See the right graph of Figure~\ref{tr2}.  
For large wave number, $k=500$ or larger, see the bivariate spline solution in \cite{LM18}. 

\subsection*{Example 5}
In our final numerical experiment, we investigate a ``real life" example of the scattering of a point source located at one focus 
of an elliptic amphitheater (see Figure \ref{fig:ellipse}). We impose Dirichlet boundary conditions on the boundary of the 
elliptic walls associated with the point source and the fundamental solution of the Helmholtz equation $$ u(\bfx)|_{\!_{\Gamma}} = 
-\frac{i}{4}H_0^{(1)}(k|\bfx-\bfx_0|), $$
where $\bfx_0$ is the left focus of the inner ellipse. The inner and outer PML boundaries are the rectangles $[-2,2]^2$ and 
$[-2.2,2.2]^2$. In architectural acoustics, it is well known that a point source at the left focus leads to a maximum sound 
pressure at the right focus, in agreement with our results in Figure \ref{fig:ellipse}.
\begin{figure}[t!]\small
    \centering
    \begin{subfigure}[t]{0.5\textwidth}
        \centering
        \includegraphics[height=2in]{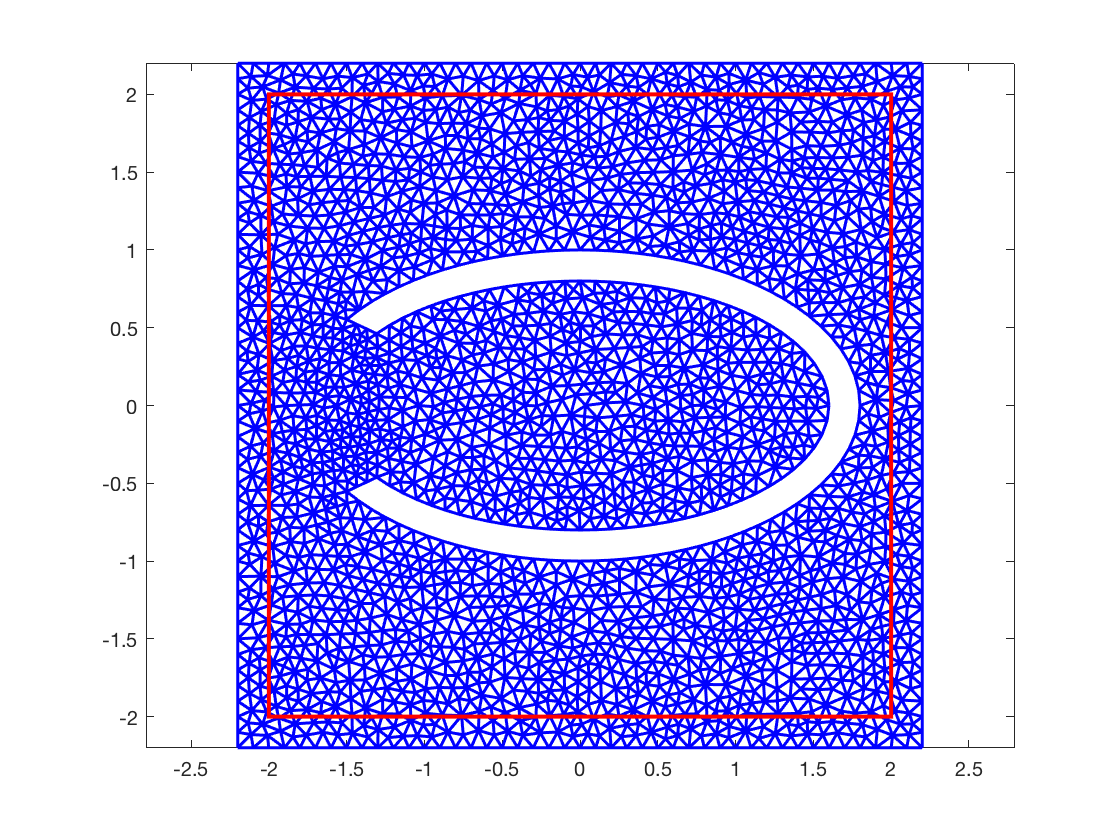}
        \caption{Mesh for elliptic section with a PML layer.}
    \end{subfigure}%
    ~ 
    \begin{subfigure}[t]{0.5\textwidth}
        \centering
        \includegraphics[height=2in]{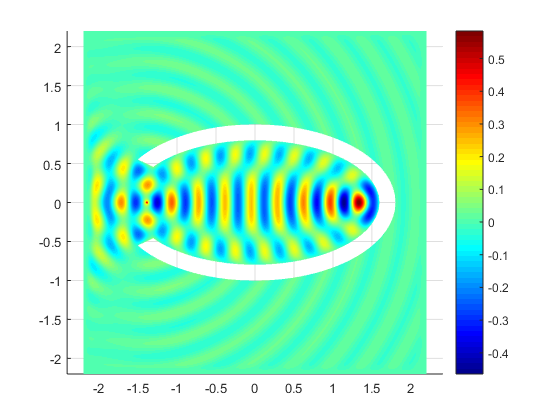}
        \caption{real part of the PML solution, wavenumber $k=6\pi$}
    \end{subfigure}
  ~
 \begin{subfigure}[t]{0.5\textwidth}
        \centering
        \includegraphics[height=2in]{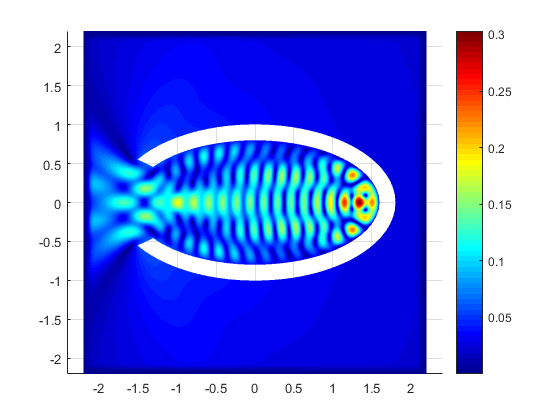}
        \caption{modulus of the PML solution, wavenumber $k=6\pi$}
    \end{subfigure}
\caption{Example 4: PML solution of acoustic wave propagation in a horizontal cross section of an elliptic amphitheater. A source 
at one focus leads to maximum sound pressure at the other focus. \label{fig:ellipse}}
\end{figure}

\section{Conclusion}
We have shown how to approximate the solution of the exterior Helmholtz equation by using bivariate splines 
with a PML layer. Numerical results suggest good accuracy when using high order splines. Extension of our
study to the Helmholtz equation in heterogeneous and anisotropic media will be reported elsewhere. Extension of
the bivariate spline method to the 3D setting will be investigated in the near future. 

\end{document}